\documentclass[a4paper,twoside]{article}
\usepackage[english]{babel}
\usepackage[latin1]{inputenc}
\usepackage{indentfirst}
\usepackage{amstext,amsfonts}
\usepackage{textcomp}
\usepackage{amssymb}
\usepackage{amscd}
\usepackage{amsthm,amsmath,amsxtra}
\usepackage{enumerate}

\pagestyle{myheadings} \marginparwidth = 40pt \evensidemargin = 53pt
\oddsidemargin = 67pt \textheight = 645pt \textwidth = 345pt
\linespread{1.3}

\markboth{A Generalized Fourier Inversion Theorem} {Alcides Buss}

\DeclareMathOperator{\spn}{span}

\newcommand*{\<}{\langle}
\renewcommand{\>}{\rangle}
\newcommand*{\Real}{\mathbb{R}}

\newcommand*{\C}{\mathbb{C}}
\newcommand*{\Cl}{\mathcal{C}}
\newcommand*{\M}{\mathcal{M}}
\newcommand*{\Mat}{\mathbb{M}} 
\newcommand*{\N}{\mathcal{N}}
\newcommand*{\F}{\mathcal{F}}
\newcommand*{\G}{\mathcal{G}}
\newcommand*{\Ls}{\mathcal{L}}
\newcommand*{\K}{\mathcal{K}}
\newcommand*{\E}{\mathcal{E}}
\newcommand*{\la}{\lambda}
\newcommand*{\La}{\Lambda}
\newcommand*{\f}{\varphi}

\newcommand*{\sbe}{\subseteq}
\newcommand*{\dd}{\,\mathrm{d}}
\newcommand*{\dualg}{\widehat{G}} 
\newcommand*{\braket}[2]{\langle#1\!\mid\!#2\rangle}
\newcommand*{\id}{{\mathrm{id}}}
\newcommand*{\su}{{\mathrm{su}}}
\newcommand*{\st}{{\mathrm{s}}}
\newcommand*{\spettis}{{\mathrm{sp}}}
\newcommand*{\ii}{{\mathrm{i}}}
\newcommand*{\red}{\mathrm{r}} 
\newcommand*{\cont}{\mathcal{C}}
\newcommand*{\csg}{C_\red^*(G)}

\hyphenation{
             rep-re-sen-ta-tion
             ho-mo-mor-phism
             ho-mo-mor-phisms
             sub-al-ge-bra
            }

\newtheorem{teor}{Theorem}[section]

\newtheorem{prop}[teor]{Proposition}
\newtheorem{defi}[teor]{Definition}
\newtheorem{obs}[teor]{Remark}
\newtheorem{coro}[teor]{Corollary}

\begin{document}

\thispagestyle{empty}

\begin{center}
{\Large \bf 
\vskip 0.5pc 
    A Generalized Fourier Inversion Theorem}
\vskip 3pc
{\large \bf Alcides Buss}\footnote{Supported by CAPES, Brazil.\newline
2000 {\it Mathematics Subject Classifications}. 43A30, 43A35, 43A50.}
\vskip 1pc
\end{center}

\noindent
\textbf{Abstract.}
In this work we define operator-valued Fourier transforms for suitable integrable elements
with respect to the Plancherel weight of a (not necessarily Abelian) locally compact group.
Our main result is a generalized version of the Fourier inversion Theorem for
strictly-unconditionally integrable Fourier transforms.
Our results generalize and improve those previously obtained by Ruy Exel in the case of Abelian groups.

\noindent
\textbf{Keywords:} Fourier inversion Theorem, Plancherel weight, integrable elements, positive definite functions, unconditional integrability.

\section{Introduction}

Let $G$ be a locally compact Abelian group and let $\dualg$ be its Pontrjagin dual.
The classical Fourier inversion Theorem recovers, under certain conditions, a continuous integrable function
$f:G\to\C$ from its Fourier transform via the formula $f(t)=\int_{\dualg}\overline{\braket{\chi}{t}}\hat{f}(\chi)\dd{t}$,
where we write $\braket{\chi}{t}:=\chi(t)$ to emphasize the duality between $G$ and $\dualg$.
Here $\hat{f}(\chi):=\int_{G}\braket{\chi}{t}f(t)\dd{t}$ denotes the Fourier transform of $f$ and
we choose suitably normalized Haar measures $\dd{t}$ and $\dd{\chi}$ on $G$ and $\dualg$, respectively.

Ruy Exel \cite{exel_unconditional} extended the
classical Fourier inversion formula to operator-valued maps $f:G\to\Ls(H)$, where $H$ is a Hilbert space and $\Ls(H)$ denotes the
space of all bounded linear operators on $H$. He considered basically two generalized versions of Fourier's inversion
Theorem. The first one requires $f$ to be a
positive definite, weakly continuous, compactly supported function. The conclusion is that the Fourier transform $\hat{f}$ --
pointwise defined by the integral $\hat{f}(\chi):=\int_{G}\braket{\chi}{t}f(t)\dd{t}$ with respect to the strong operator topology -- is
unconditionally integrable with respect to the strong topology and its strong unconditional integral $\int_{\dualg}\overline{\braket{\chi}{t}}\hat{f}(\chi)\dd{\chi}$ equals
$f(t)$ for all $t\in G$. The second version requires $f$ to be a positive definite, strictly continuous, compactly supported function
$\dualg\to\M(A)$, where $A$ is now any $C^*$-algebra and $\M(A)$ is the multiplier algebra of $A$. Again, as
a conclusion one recovers $f(t)$ from the integral $\int_{\dualg}\overline{\braket{\chi}{t}}\hat{f}(\chi)\dd{\chi}$, but now all the integrals
are interpreted as strict unconditional integrals, that is, unconditional integrals with respect to the strict topology in $\M(A)$.

Both versions of Fourier's inversion Theorem considered above are equivalent. Indeed, one of the main tools used in \cite{exel_unconditional} is
Naimark's theorem on the structure of positive definite maps (see \cite[Theorem 3.2]{exel_unconditional}). It says that any
positive definite, weakly continuous map $f:G\to\Ls(H)$ has the form $f(t)=S^*u_t S$,
where $u$ is some strongly continuous unitary representation
of $G$ on a Hilbert space $H_u$ and $S:H\to H_u$ is some bounded linear operator. As a consequence any such map is automatically bounded and
strongly continuous. Moreover, it also implies that $f$ is strictly continuous if considered as a map $G\to \M\bigl(\K(H)\bigr)$, where
$\K(H)$ denotes the algebra of compact operators on $H$ and we identify $\M\bigl(\K(H)\bigr)\cong\Ls(H)$ in the canonical way.
Thus, if in addition $f$ is compactly supported, we can apply to $f$ the second version of Fourier's inversion Theorem for strictly
continuous maps mentioned above. Conversely, if $f:G\to\M(A)$ is a positive definite, strictly continuous,
compactly supported map, then we may view $f$ as a strongly continuous
map $G\to \Ls(H)$ and apply the first version, where $H$ is some Hilbert space endowed with a faithful nondegenerate representation of $A$.

What happens with the Fourier inversion Theorem if $G$ is not Abelian? The purpose of this paper is to answer this question.
We extend Exel's generalized version of Fourier's inversion Theorem to non-Abelian groups.
The starting point is to observe that the space of bounded, strictly continuous maps $\dualg\to\M(A)$ can be naturally identified with
the multiplier algebra $\M\bigl(A\otimes \csg\bigr)$, where $\csg$ denotes the reduced group $C^*$-algebra of $G$.
Here and throughout the rest of this paper, the symbol
$\otimes$ always denotes the \emph{minimal} tensor product.

Next, using the Plancherel weight on $\csg$ as a substitute for the classical Haar measure on $\dualg$ if $G$ is non-Abelian,
we define an appropriate subspace of \emph{integrable} elements in $\M\bigl(A\otimes \csg\bigr)$.
For each integrable element $a$, we define a (generalized) Fourier transform $\hat{a}$ which is a function on $G$ taking values in $\M(A)$.
As a conclusion, we prove that $a$ can be recovered from its Fourier transform via the strict unconditional integral
$a=\int_G \hat{a}(t)\otimes\la_t\dd{t}$, whenever this integral exists. The map $t\mapsto \la_t$ is the left regular representation of $G$ on
the Hilbert space $L^2(G)$ of square-integrable measurable functions on $G$: $\la_t(\xi)(s):=\xi(t^{-1}s)$ for all $\xi\in L^2(G)$ and $t,s\in G$.

Our version of the Fourier inversion Theorem can be interpreted as a generalization of Exel's version in \cite{exel_unconditional}.
Furthermore, our proof is considerably simpler than the original one in \cite{exel_unconditional}. While Exel's proof uses strong results like Naimark's theorem on the structure of positive definite maps and Stone's theorem on representations of locally compact Abelian groups, our proof basically only uses the definition.

\section{Weight theory}

One of the basic tools in this work is weight theory. In this section we recall some basic concepts, mainly to fix the notation. We refer to \cite{kustermans_vaes} for a detailed treatment.
Recall that a \emph{weight} on a $C^*$-algebra $C$ is a map $\f:C^+\to[0,\infty]$
that is additive and positively homogeneous, where $C^+$ denotes the set of positive elements in $C$.

We say that a positive element $x\in C^+$ is \emph{integrable} with respect to $\f$ if $\f(x)<\infty$.
We write $\M_{\f}^+$ for the set of positive integrable elements and $\N_\f$ for
the space $\{x\in C:x^*x\in\M_{\f}^+\}$ of \emph{square-integrable} elements.
Let $\M_{\f}$ be the linear span of $\M_{\f}^+$.
Then $\M_{\f}$ is a $*$-subalgebra of $C$,
$\N_\f$ is a left ideal of $C$ and
$\M_{\f}$ is the linear span of $\N_\f^*\N_\f=\{x^*y:x,y\in \N_\f\}$.

If $\M_{\f}^+$ is dense in $C^+$, then we say that $\f$ is
\emph{densely defined}. We also denote by $\f$ the unique linear
extension of $\f$ to $\M_{\f}$. We say that $\f$ is
\emph{lower semi-continuous} if $\{x\in C^+:\f(x)\leq c\}$ is closed for all $c\in\Real^+$ or,
equivalently,
for every net $(x_i)$ in  $C^+$ and $x\in C^+$, $x_i\to x$ implies $\f(x)\leq\liminf\bigl(\f(x_i)\bigr)$.

Define the sets
$\F_{\f}:=\{\omega\in C_+^*:\omega(x)\leq\f(x)\mbox{ for all } x\in C^+\}$
and
$\G_{\f}:=\{\alpha\omega:\omega  \in\F_{\f},\alpha\in (0,1)\}\sbe \F_{\f}.$
If we endow $\F_{\f}$ with the natural order of
$C^*_+$ then $\G_{\f}$ is a directed subset of
$\F_{\f}$, so that $\G_{\f}$ can be used as the index set
of a net. If $\f$ is lower semi-continuous, then (\cite[Theorem~1.6]{kustermans_vaes})
\begin{equation}\label{129}
\f(x)=\sup\{\omega(x):\omega\in\F_{\f}\}=\lim\limits_{\omega\in
\G_{\f}}\omega(x)\quad\mbox{for all }x\in \M_\f^+.
\end{equation}
Any lower semi-continuous weight $\f$ can be naturally extended to the multiplier
algebra $\M(C)$ by setting
$\bar{\f}(x):=\sup\{\omega(x):\omega\in\F_{\f}\}$ for all $x\in \M(\G)^+$,
where each $\omega\in C^*$ is extended to $\M(C)$ as usual. Then $\bar\f$ is the unique strictly
lower semi-continuous weight on $\M(C)$ extending $\f$.
We shall also denote the extension $\bar\f$ by $\f$ and use the notations
$\bar{\M}_{\f}^+=\M_{\bar{\f}}^+$,
$\bar{\M}_{\f}=\M_{\bar{\f}}$ and
$\bar{\N}_{\f}=\N_{\bar{\f}}$. Equation~\eqref{129} can be generalized:
$\f(x)=\lim\limits_{\omega\in\G_{\f}}\omega(x)$ for all $x\in\bar\M_{\f}$.

\subsection{Slicing with weights}

Let $A$ and $C$ be $C^*$-algebras.
Given a bounded linear functional $\theta$ on $A$,
we write $\theta\otimes\id$ for the canonical \emph{slice map} $A\otimes C\to C$. It is the unique bounded linear map satisfying the relation
$(\theta\otimes\id)(a\otimes x)=\theta(a)x$ for all $a\in A$ and $x\in C$. The map $\theta\otimes\id$ can be uniquely extended
to a strictly continuous map $\M\bigl(A\otimes C\bigr)\to\M\bigl(C\bigr)$, also denoted by $\theta\otimes\id$.

\begin{defi}\label{integrable_elements} Let $\f$ be a weight on $C$.
We say that a positive element $a\in \M\bigl(A\otimes C\bigr)^+$
is \emph{integrable \textup(with respect to the weight $\f$\textup)}, if there is $b\in\M(A)$ such that for every positive linear functional $\theta\in A^*_+$, $(\theta\otimes\id)(a)\in \bar\M_\f$ and $\f\bigl((\theta\otimes\id)(a)\bigr)=\theta(b)$.
\end{defi}

By Propositions~3.9 and~3.14 in \cite{kustermans_vaes}, $a\in \M\bigl(A\otimes C\bigr)^+$ is integrable if and only if
$a$ belongs to the set $\bar\M_{\id\otimes\f}^+$ of elements $a\in \M\bigl(A\otimes C\bigr)^+$ for which the net
\(\bigl((\id\otimes \omega)(x)\bigr)_{\omega\in\G_{\f}}\) converges strictly in  $\M(A)$.
Moreover, in this case the element $b\in \M(A)$ in Definition~\ref{integrable_elements} is given by $b=(\id\otimes\f)(a)$,
where we write $(\id\otimes\f)(a)$ for the strict limit of $\bigl((\id\otimes \omega)(a)\bigr)_{\omega\in\G_{\f}}$.
Let $\bar\M_{\id\otimes\f}$ be the linear span of $\bar\M_{\id\otimes\f}^+$ in $\M\bigl(A\otimes C\bigr)$. The map $\id\otimes\f$ has a
unique linear extension to $\bar\M_{\id\otimes\f}$, also denoted by $\id\otimes\f$. Elements in $\bar\M_{\id\otimes\f}$ are also called \emph{integrable}.

Let us assume that $C$ is commutative, that is, it has the form $C=\cont_0(X)$ for some locally compact topological space $X$, and suppose that $\f$ is the weight coming from a Radon measure $\mu$ on $X$. In other words, $\f$ is given by the integral $\f(f)=\int_X f(x)\dd{\mu(x)}$ for all $f\in \cont_0(X)^+$. In this case, the notion of integrability defined above recovers the usual notions of integrability for operator-valued functions on $X$. Indeed, first of all we may identify $\M(A\otimes C)$ with the $C^*$-algebra $\cont_b(X,\M^\st(A))$ of bounded
strictly continuous functions $f:X\to\M(A)$. Under this identification, we have the following result:

\begin{prop}\label{integrable_commutative} With the notations above, let $f$ be a positive element in $\M(A\otimes C)\cong\cont_b(X,\M^\st(A))$.
Then the following assertions are equivalent\textup:
\begin{enumerate}
\item[\textup{(i)}] $f$ is integrable in the sense of \textup{Definition~\ref{integrable_elements}};
\item[\textup{(ii)}]
             the net of strict Bochner integrals $\left(\int_X^\st f(x)\frac{\dd{\omega}}{\dd{\mu}}(x)\dd{\mu(x)}\right)_{\omega\in \G_\f}$
             converges strictly in $\M(A)$. Here $C^*_+=\cont_0(X)^*_+$ is identified with the space of positive bounded measures on $X$ and, for each $\omega\in \F_\f$, the symbol $\frac{\dd{\omega}}{\dd{\mu}}$ denotes the Radon-Nikodym derivative of $\omega$ with respect to $\mu$. Note that $\F_\f$ consists of the positive bounded measures $\omega$ that satisfy $\omega(E)\leq\mu(E)$ for every $\mu$-measurable subset $E\sbe X$. In particular, each $\omega\in \F_\f$ is absolutely continuous with respect to $\mu$ so that the Radon-Nikodym derivative $\frac{\dd{\omega}}{\dd{\mu}}$ is well-defined. Note also that $\frac{\dd{\omega}}{\dd{\mu}}$ is $\mu$-integrable and $0\leq\frac{\dd{\omega}}{\dd{\mu}}\leq 1$. Conversely, any such function gives rise to an element of $\F_\f$.
\item[\textup{(iii)}] the net of strict Bochner integrals $\left(\int_X^\st f(x)\omega_i(x)\dd{\mu(x)}\right)_{i\in I}$ converges strictly in $\M(A)$ for any net $\left(\omega_i\right)_{i\in I}$ of compactly supported continuous functions $\omega_i:X\to [0,1]$ for which $\omega_i(x)\to 1$ uniformly on compact subsets of $X$;
\item[\textup{(iv)}] the net of strict Bochner integrals $\left(\int_X^\st f(x)\omega_i(x)\dd{\mu(x)}\right)_{i\in I}$ converges strictly in $\M(A)$ for some net $\left(\omega_i\right)_{i\in I}$ as in \textup{(iii)};
\item[\textup{(v)}] $f:X\to \M(A)$ is strictly-unconditionally integrable, that is, the net of strict Bochner integrals
            $\left(\int_K^\st f(x)\dd{\mu(x)}\right)_{K\in \Cl}$ converges strictly in $\M(A)$, where $\Cl$ is the set of all $\mu$-measurable relatively compact subsets of $X$;
\item[\textup{(vi)}] $f:X\to \M(A)$ is strictly Pettis integrable, that is, for any $\mu$-measurable subset $E\sbe X$,
                there is an element $a_E\in \M(A)$ such
                that, for every continuous linear functional $\theta\in A^*$, the scalar valued function $\theta\circ f$ is $\mu$-integrable on $E$ in ordinary's sense, and $\int_E \theta(f(x))\dd{\mu(x)}=\theta(a_E)$;
\item[\textup{(vii)}] there is $a\in \M(A)$ such that for any positive linear functional $\theta\in A^*_+$, the scalar function $\theta\circ f$ is $\mu$-integrable on $X$ in ordinary's sense, and $\int_X \theta(f(x))\dd{\mu(x)}=\theta(a)$.
\end{enumerate}
In this event, we have
\begin{multline*}(\id\otimes\f)(f)=\st\mbox{-}\!\!\lim\limits_{\omega\in \G_\f}\int_X^\st f(x)\omega(x)\dd{\mu(x)}
        =\st\mbox{-}\!\lim\limits_{i\in I}\int_X^\st f(x)\omega_i(x)\dd{\mu(x)}\\
        =\int_X^\su f(x)\dd{\mu(x)}=\int_X^{\spettis} f(x)\dd{\mu(x)}=a.
\end{multline*}
The symbol $\int_X^\su$ above refers to \emph{strict unconditional} integrals and $\int_X^\spettis$ refers to \emph{strict Pettis} integrals.
\end{prop}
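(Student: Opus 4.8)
The plan is to translate the slice maps into operations on functions, record a handful of standing facts, isolate one soft lemma about decreasing nets, and then feed everything into Definition~\ref{integrable_elements} together with elementary measure theory; in every case the common value will turn out to be $a=(\id\otimes\f)(f)$. I would start with a translation dictionary: testing on elementary tensors $a'\otimes g$ ($a'\in A$, $g\in\cont_0(X)$) and extending by strict density and strict continuity of the slice maps, one gets, for $f\in\M(A\otimes C)^+\cong\cont_b(X,\M^\st(A))^+$, all $\theta\in A^*$ and all $\omega\in\cont_0(X)^*$, that $(\theta\otimes\id)(f)=\theta\circ f$ (a bounded continuous scalar function) and $(\id\otimes\omega)(f)=\int_X^\st f(x)\dd{\omega(x)}$; the latter persists when $\omega$ has a bounded, relatively compactly supported $\mu$-measurable density (approximate the density in $L^1(\mu)$ by continuous ones), so that $\int_K^\st f\dd\mu=(\id\otimes\mathbf 1_K\mu)(f)$ for relatively compact $\mu$-measurable $K$, and $\int_X^\st f\omega_i\dd\mu=(\id\otimes\omega_i\mu)(f)$. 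Alongside this I would use: a positive $\theta\in A^*_+$ extends to a strictly continuous positive functional on $\M(A)$ (the strict dual of $\M(A)$ is $A^*$), $A^*_+$ detects positivity and the order on $\M(A)$, $\M(A)$ is complete in the strict topology on bounded sets, and the $C^*$-inequality $\|bx\|^2=\|bx^2b^*\|\le\|x\|\,\|bxb^*\|$ for $x\in\M(A)^+$, $b\in A$. Combined with polarisation, this last inequality converts \emph{strict} convergence of a bounded monotone net of positive elements into \emph{norm} convergence of its compressions $b^*(\cdot)b$ ($b\in A$), and back; this is the engine used throughout.

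The only genuine lemma I would prove is: \emph{if $B$ is a $C^*$-algebra and $(x_i)$ is a decreasing net in $B^+$ with $x_i\to0$ weakly, then $\|x_i\|\to0$.} Here $\|x_i\|$ decreases to some $L\ge0$; if $L>0$, choose $\psi_i\in B^*_+$ with $\|\psi_i\|\le1$ and $\psi_i(x_i)>L/2$, let $\psi\ge0$ be a weak$^*$ cluster point of $(\psi_i)$ (Alaoglu), and observe that for each fixed $i_0$ one has $\psi_i(x_{i_0})\ge\psi_i(x_i)>L/2$ cofinally, whence $\psi(x_{i_0})\ge L/2$ for every $i_0$, contradicting $\psi(x_{i_0})\to0$.

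Now the implications. (i)$\Leftrightarrow$(ii): by the remark after Definition~\ref{integrable_elements} (Propositions~3.9 and 3.14 of \cite{kustermans_vaes}), $f$ is integrable iff $\bigl((\id\otimes\omega)(f)\bigr)_{\omega\in\G_\f}$ converges strictly, which by the dictionary is the net of (ii); the limit is $a$. (i)$\Leftrightarrow$(vii): Definition~\ref{integrable_elements}, read through $(\theta\otimes\id)(f)=\theta\circ f$ and $\f=\int_X(\cdot)\dd\mu$, \emph{is} (vii) with $b=a$. (i)$\Rightarrow$(v): with $m_K:=\int_K^\st f\dd\mu\in\M(A)^+$ the net $(m_K)_{K\in\Cl}$ is increasing, $m_K\le a$, and for $b\in A$ the net $b^*(a-m_K)b\in A^+$ is decreasing with $\psi\bigl(b^*(a-m_K)b\bigr)=\int_{X\setminus K}\psi(b^*f(x)b)\dd{\mu(x)}\to0$ for $\psi\in A^*_+$ (inner regularity of $\mu$, using $\int_X\psi(b^*f(x)b)\dd\mu=\psi(b^*ab)<\infty$, which is (i)); the lemma gives $\|b^*(a-m_K)b\|\to0$, hence $m_K\to a$ strictly. (v)$\Rightarrow$(vii): $\int_K\theta\circ f\dd\mu=\theta(m_K)$ converges to $\theta(L)$ and also, over the increasing net of sets, to $\int_X\theta\circ f\dd\mu$. (i)$\Rightarrow$(iii): for any net $(\omega_i)$ as in (iii), any $b\in A$ and any compact $K$, $\bigl\|b^*ab-\int_X b^*f(x)b\,\omega_i(x)\dd{\mu(x)}\bigr\|\le\|b^*(a-m_K)b\|+\|1-\omega_i\|_{C(K)}\|b^*ab\|$, which is small for $K$ then $i$ large; polarisation upgrades this to strict convergence of $\int_X^\st f\omega_i\dd\mu$ to $a$. (iii)$\Rightarrow$(iv) is immediate, such nets existing (Urysohn functions exhausting the compacts). (iv)$\Rightarrow$(vii): apply $\theta\in A^*_+$ to $\int_X^\st f\omega_i\dd\mu\to a$ and use $\omega_i\to1$ uniformly on compacts together with inner regularity. (vi)$\Rightarrow$(vii) is the case $E=X$. (vii)$\Rightarrow$(vi): for $\mu$-measurable $E$, form in the von Neumann algebra $A^{**}$ the suprema $w_E=\sup_F m_F$, $w_{E^c}=\sup_G m_G$ over relatively compact $\mu$-measurable $F\sbe E$, $G\sbe E^c$; since each relatively compact $\mu$-measurable $K$ splits as $(K\cap E)\sqcup(K\cap E^c)$ and $m_K\to a$ strictly, $w_E+w_{E^c}=a$, so $m_F\nearrow w_E$ and $a-m_G\searrow w_E$ are nets in $\M(A)$ with $(a-m_G)-m_F\searrow0$ in $\M(A)^+$; the lemma applied to $b^*\bigl((a-m_G)-m_F\bigr)b$, together with $0\le m_{F'}-m_F\le(a-m_G)-m_F$ for $F\sbe F'\sbe E$ and the $C^*$-inequality, shows $(m_F)$ is strictly Cauchy, hence $m_F\to a_E\in\M(A)$ strictly, and $\theta(a_E)=\lim_F\int_F\theta\circ f\dd\mu=\int_E\theta\circ f\dd\mu$ first for $\theta\in A^*_+$ and then for all $\theta\in A^*$. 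In each case the limit is $a$, which is the asserted string of equalities.

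The step I expect to cost the most is (vii)$\Rightarrow$(vi) for a $\mu$-measurable $E$ that is neither relatively compact nor $\sigma$-finite: then $\int_E^\st f\dd\mu$ need not exist as a genuine strict Bochner integral, and the multiplier $a_E$ has to be extracted by the monotone-squeeze argument inside $A^{**}$ fed through the lemma. Everything else is either bookkeeping with the dictionary or standard measure theory combined with the ``strict $\leftrightarrow$ norm after compression'' reduction.
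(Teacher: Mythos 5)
Your proposal is correct, but it takes a genuinely different and more self-contained route than the paper. The paper obtains (i)$\Leftrightarrow$(ii)$\Leftrightarrow$(vii) exactly as you do (via the slice-map dictionary and the remark after Definition~\ref{integrable_elements}), but then outsources the rest: (v)$\Rightarrow$(vi) is deduced from Exel's result that strict unconditional integrability is stable under multiplication by bounded measurable functions \cite[Proposition 2.8]{exel_unconditional}, (vii)$\Rightarrow$(v) from \cite[Lemma 3.12]{kustermans_vaes} on increasing nets of positive multipliers, and (iii)$\Leftrightarrow$(iv)$\Leftrightarrow$(v) from \cite[Proposition 12]{buss_meyer}. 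You instead rederive everything from two elementary ingredients: your Dini-type lemma (a decreasing net in $B^+$ tending to $0$ weakly tends to $0$ in norm -- the Alaoglu/cluster-point argument is correct) and the inequality $\|xb\|^2\le\|x\|\,\|b^*xb\|$ for $x\ge0$; together these amount to a proof of the cited Kustermans--Vaes lemma. You also replace the appeal to Exel's restriction result by a direct exhaustion argument for (vii)$\Rightarrow$(vi); note that the splitting $m_K=m_{K\cap E}+m_{K\cap E^{c}}$ already gives $m_F+m_G\le a$ and $\theta\bigl((a-m_G)-m_F\bigr)=\int_{X\setminus(F\sqcup G)}\theta(f(x))\dd{\mu(x)}\to0$, so the detour through suprema in $A^{**}$ could be avoided, though it is not wrong. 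What your route buys is independence from \cite{exel_unconditional} and \cite{buss_meyer} and an explicit treatment of (vi) for measurable $E$ that are neither relatively compact nor $\sigma$-finite; what it costs is length, together with the same silent reliance on inner regularity of the Radon measure $\mu$ (to pass from $\sup_K\int_K$ to $\int_X$) that the paper's proof also makes. Two cosmetic repairs: in (i)$\Rightarrow$(iii) your displayed estimate omits the tail term $\int_{X\setminus K}b^*f(x)b\,\omega_i(x)\dd{\mu(x)}$, which is harmless since $0\le\omega_i\le1$ makes it dominated by $\|b^*(a-m_K)b\|$; and the upgrade from compressions to strict convergence there uses positivity of the differences $a-\int_X^{\st} f(x)\omega_i(x)\dd{\mu(x)}$ rather than polarisation.
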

\begin{proof} As already noted above, (i) is equivalent to the fact that the net $\bigl((\id\otimes\omega)(f)\bigr)_{\omega\in \G_\f}$ converges strictly in $\M(A)$. Under the identification in (ii), each $(\id\otimes\omega)(f)$ corresponds to $\int_X^\st f(x)\frac{\dd{\omega}}{\dd{\mu}}(x)\dd{\mu(x)}$.
Thus (i) is equivalent to (ii). Item (vii) is just a reformulation of Definition~\ref{integrable_elements} because, under the identification $\M(A\otimes C)\cong\cont_b(X,\M^\st(A))$, the element $(\theta\otimes\id)(f)$ corresponds to composition $\theta\circ f$. Hence (i) is also equivalent to (vii). If $f$ is strictly-unconditionally integrable, then so is the pointwise product $\omega\cdot f$ for any bounded measurable scalar function $\omega:X\to\C$ (see \cite[Proposition 2.8]{exel_unconditional}). In particular, so is the restriction of $f$ to a $\mu$-measurable subset $E\sbe X$. From this, we see that (v) implies (vi). It is trivial that (vi) implies (vii). To see that (vii) implies (v), observe that because $f$ takes positive values, $\left(\int_K^\st f(x)\dd{\mu(x)}\right)_{K\in \Cl}$ is an increasing net of positive elements in $\M(A)$. 
By \cite[Lemma 3.12]{kustermans_vaes},
this net converges to some $a\in \M(A)$ if and only if
$\left(\int_K^\st \theta(f(x))\dd{\mu(x)}\right)_{K\in \Cl}$ converges to $\theta(a)$ for all $\theta\in A^*_+$. And this condition is equivalent to
(vii). We conclude that (i)$\Leftrightarrow$(ii)$\Leftrightarrow$(vii) and (v)$\Leftrightarrow$(vi)$\Leftrightarrow$(vii). The equivalences (iii)$\Leftrightarrow$(iv)$\Leftrightarrow$(v) follow from \cite[Proposition 12]{buss_meyer}. The last assertion is an easy consequence, whence the result.
\end{proof}

\begin{obs} It has been already observed by Ruy Exel in \cite{exel_unconditional,exel_spectral} that unconditional integrability is equivalent to Pettis integrability, at least for continuous operator-valued functions. A detailed proof of this fact in a more general context of functions defined on measure spaces and taking values in arbitrary Banach spaces can be found in the dissertation of Patricia Hess \cite[Teorema 4.14]{hess}.
The proof in \cite{hess} assumes $\sigma$-locality, which is a natural countability condition in measure-theoretical settings. Note that our proof above does not assume any countability condition. However, we are assuming strict continuity and positivity of our operator-valued function $f:X\to \M(A)$, and in particular our proof does not make sense in the general context of Banach spaces as in \cite{hess}.
\end{obs}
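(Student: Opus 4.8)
The plan is to route everything through the description of integrability recorded just before the statement: by Propositions~3.9 and~3.14 of \cite{kustermans_vaes}, condition~(i) is equivalent to strict convergence in $\M(A)$ of the net $\bigl((\id\otimes\omega)(f)\bigr)_{\omega\in\G_\f}$, and I would use this as the hub to which the remaining six conditions get attached. The first task is to unwind the isomorphism $\M(A\otimes C)\cong\cont_b(X,\M^\st(A))$ at the level of slice maps: for $\theta\in A^*$ the multiplier $(\theta\otimes\id)(f)\in\M(C)=\cont_b(X)$ is simply the function $x\mapsto\theta(f(x))$, while for a finite positive measure $\omega$ on $X$ the element $(\id\otimes\omega)(f)\in\M(A)$ is the strict Bochner integral $\int_X^\st f(x)\dd\omega(x)$; when moreover $\omega\in\F_\f$, so that $\omega\ll\mu$ with density $\frac{\dd\omega}{\dd\mu}\in[0,1]$, this equals $\int_X^\st f(x)\frac{\dd\omega}{\dd\mu}(x)\dd\mu(x)$. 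Granting these translations, (i)$\Leftrightarrow$(ii) is immediate, and (i)$\Leftrightarrow$(vii) is a literal transcription of Definition~\ref{integrable_elements} once one observes that, for the weight attached to $\mu$, the membership $(\theta\otimes\id)(f)\in\bar\M_\f$ means exactly that $\theta\circ f$ is $\mu$-integrable and that then $\f\bigl((\theta\otimes\id)(f)\bigr)=\int_X\theta(f(x))\dd\mu(x)$.

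Next I would treat the block (v)$\Leftrightarrow$(vi)$\Leftrightarrow$(vii), using positivity of $f$. For (v)$\Rightarrow$(vi): if $f$ is strictly-unconditionally integrable then so is $\mathbf{1}_E\cdot f$ for every $\mu$-measurable $E$ by \cite[Proposition~2.8]{exel_unconditional}, its strict unconditional integral is some $a_E\in\M(A)$, and applying $\theta\in A^*$ — which is strictly continuous on bounded sets — termwise to the defining net gives $\int_E\theta(f(x))\dd\mu(x)=\theta(a_E)$, the integrability of $\theta\circ f$ on $E$ coming from $f\geq 0$. The implication (vi)$\Rightarrow$(vii) is the case $E=X$. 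For (vii)$\Rightarrow$(v) the point is that, since $f\geq 0$, the net $\bigl(\int_K^\st f(x)\dd\mu(x)\bigr)_{K\in\Cl}$ is increasing in $\M(A)^+$; by \cite[Lemma~3.12]{kustermans_vaes} it converges strictly to some $a\in\M(A)$ precisely when $\bigl(\int_K\theta(f(x))\dd\mu(x)\bigr)_{K\in\Cl}$ converges for every $\theta\in A^*_+$, and this holds by the ordinary monotone convergence theorem for the nonnegative function $\theta\circ f$, with limit $\int_X\theta(f(x))\dd\mu(x)=\theta(a)$. This establishes (i)$\Leftrightarrow$(ii)$\Leftrightarrow$(vii)$\Leftrightarrow$(v)$\Leftrightarrow$(vi).

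The remaining equivalences (iii)$\Leftrightarrow$(iv)$\Leftrightarrow$(v) I would deduce from \cite[Proposition~12]{buss_meyer}, which characterises strict-unconditional integrability of a strictly continuous function by convergence of its truncated strict integrals against an approximate unit of compactly supported $[0,1]$-valued functions; here (iii)$\Rightarrow$(iv) and (v)$\Rightarrow$(iii) are routine, while (iv)$\Rightarrow$(v) — promoting convergence along one such net to full unconditional integrability — is the genuinely substantive step and is exactly what that reference provides. Finally, once all conditions hold simultaneously, the displayed string of equalities follows by identifying the various limits: the element $(\id\otimes\f)(f)$ is by construction the strict limit of the $\G_\f$-net, and positivity of $f$ makes each of $\bigl(\int_X^\st f\omega_i\dd\mu\bigr)_i$, $\bigl(\int_X^\st f\,\frac{\dd\omega}{\dd\mu}\dd\mu\bigr)_{\omega\in\G_\f}$ and $\bigl(\int_K^\st f\dd\mu\bigr)_{K\in\Cl}$ an increasing net of positive elements with the same supremum, after the harmless rescalings $\omega_i\mapsto(1-\varepsilon)\omega_i$ that place the bump functions inside $\F_\f$; hence they share the strict limit $\int_X^\su f\dd\mu$, and evaluating against $\theta\in A^*_+$ identifies this common value with the element $a$ of~(vii), which is by definition $\int_X^\spettis f\dd\mu$. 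I expect the only real obstacle to be the implication (iv)$\Rightarrow$(v) together with the verification that these approximating nets all yield the same strict limit; the rest is bookkeeping with the slice-map identifications.
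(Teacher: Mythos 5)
Your proposal is correct and follows essentially the same route as the paper's own argument for Proposition~\ref{integrable_commutative} (whose proof is what this remark refers to): you use the strict convergence of the net $\bigl((\id\otimes\omega)(f)\bigr)_{\omega\in\G_\f}$ as the hub for (i), (ii), (vii), deduce (v)$\Rightarrow$(vi) from \cite[Proposition 2.8]{exel_unconditional}, get (vii)$\Rightarrow$(v) from positivity together with \cite[Lemma 3.12]{kustermans_vaes}, and invoke \cite[Proposition 12]{buss_meyer} for (iii)$\Leftrightarrow$(iv)$\Leftrightarrow$(v), exactly as in the paper. The extra details you supply (monotone convergence for $\theta\circ f$ and the rescaling of the bump functions into $\F_\f$) are harmless elaborations, not a different method.
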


\subsection{The Plancherel weight}\label{S:plancherel_weight}

Let $G$ be a locally compact group. In this section, we collect some facts on
the Plancherel weight of the group von Neumann algebra $\Ls(G)$ of $G$. We refer to \cite[Section~7.2]{pedersen}
or \cite[Section~VII.3]{takesaki2} for a detailed construction.
Recall that the group von
Neumann algebra of $G$ is the von Neumann algebra $\Ls(G)=\csg''\sbe\Ls\bigl(L^2(G)\bigr)$ generated by
the left regular representation of $G$.

A function $\xi\in L^2(G)$ is called \emph{left bounded} if the map
$L^2(G)\supseteq\cont_c(G)\ni f\mapsto \xi*f\in L^2(G)$ extends to a bounded
operator on $L^2(G)$. In this case, we denote this operator by $\lambda(\xi)$.
Note that $\lambda(\xi)$ belongs to $\Ls(G)$ for every left bounded function
$\xi$. The Plancherel weight $\tilde\f:\Ls(G)^+\to [0,\infty]$ is defined by the formula
$$
\tilde\f(x):=\left\{
\begin{array}{cc}
\|\xi\|_2^2 & \mbox{if }x^{\frac{1}{2}}=\lambda(\xi) \mbox{ for some left bounded function }\xi\in L^2(G),\\
\infty & \mbox{otherwise}.\qquad\qquad\qquad\qquad\qquad\qquad\qquad\qquad\qquad\quad\,\,\,\,
\end{array}\right.
$$
We are mainly interested in the restriction of $\tilde\f$ to $\csg^+$, which we denote by $\f$.
It is a densely defined, lower semi-continuous weight on $\csg$.

From the definition of $\tilde\f$ above it follows that
$$\N_{\tilde\f}=\bigl\{\lambda(\xi):\xi\in L^2(G)\mbox{ is left bounded}\bigr\}$$
and (by polarization) $\tilde\f\bigl(\lambda(\xi)^*\lambda(\eta)\bigr)=\braket{\xi}{\eta}$ whenever $\xi,\eta\in L^2(G)$ are left bounded.
Here $\braket{\cdot}{\cdot}$ denotes the inner product on $L^2(G)$ (we assume it is linear on the second variable).
For functions $\xi$ and $\eta$ on $G$, we write $\xi*\eta$ and $\xi^*$ for the convolution
$\xi*\eta(t):=\int_G\xi(s)\eta(s^{-1}t)\dd{s}$ and the involution $\xi^*(t):=\Delta(t)^{-1}\overline{\xi(t^{-1})}$ whenever the operations
make sense. A short calculation shows that
($\xi^**\eta)(t)=\<\xi|V_t\eta\>$ for all $\xi,\eta\in L^2(G)$ and $t\in G$,
where $V_t(\eta)(s):=\eta(st)$. In particular, the function $\xi^**\eta$ is continuous and $(\xi^**\eta)(e)=\<\xi|\eta\>$, where $e$ denotes the identity element of $G$.
Thus, if $\xi,\eta\in L^2(G)$
are left bounded, the operator $\lambda(\xi^**\eta)=\lambda(\xi)^*\lambda(\eta)$ belongs to $\M_{\tilde{\f}}$ and
$\tilde{\f}\bigl(\lambda(\xi^**\eta)\bigr)=\braket{\xi}{\eta}=(\xi^**\eta)(e).$
We conclude that
$$\M_{\tilde\f}=\lambda\bigl(\cont_e(G)\bigr),$$
where $\cont_e(G):=\spn\{\xi^**\eta:\xi,\eta\in L^2(G)\mbox{ left
bounded}\}$, and $\tilde\f$ is given on functions of
$\cont_e(G)$ by evaluation at $e\in G$. Since $\f$ is the restriction of
$\tilde\f$ to $\csg$, we have $\bar\M_\f\sbe\M_{\tilde\f}$ and
the same formula holds for $\f$.

Finally, let us we remark that $\tilde\f$ is a \emph{KMS-weight} (see \cite{kustermans_vaes} for the definition of KMS-weights).
The \emph{modular automorphism group} $\{\sigma_x\}_{x\in \Real}$ of $\tilde\f$ is determined by
$\sigma_x(\lambda_t)=\Delta(t)^{\ii x}\lambda_t$ for all $t\in G$ and $x\in \Real$, where $\Delta$ is the modular function of $G$.
In particular, this implies that $\lambda_t$ is \emph{analytic} with respect to $\sigma$ -- meaning that the function $x\mapsto \sigma_x(\lambda_t)$
extends to an analytic function on $\C$. Its analytic extension is given by
\begin{equation}\label{E:analytic extension}
\sigma_z(\la_t)=\Delta(t)^{\ii z}\la_t\quad\mbox{for all }z\in \C\mbox{ and }t\in G.
\end{equation}

\begin{defi}\label{367} Given an integrable element $x\in \M_{\tilde\f}$, we define the \emph{Fourier transform} of $x$ to be
the function $\hat{x}:G\to\C$ given by $\hat{x}(t):=\tilde\f(\lambda_t^{-1}x)$ for all $t\in G$.
\end{defi}

Since $\lambda_t^{-1}=\lambda_{t^{-1}}$ is analytic with respect to the modular group of $\tilde\f$,
the element $\lambda_t^{-1}x$ belongs to $\M_{\tilde\f}$ whenever $x\in \M_{\tilde\f}$ (see \cite[Proposition 1.12]{kustermans_vaes}).
This fact can be also proved directly from the definition of $\tilde\f$ (see \cite[Proposition 2.8]{pedersen}).
Thus the Fourier transform $\hat{x}$ is well-defined.

If $G$ is Abelian, then under the isomorphism $\Ls(G)\cong L^\infty(\dualg)$, the Plancherel
weight on $\Ls(G)$ corresponds to the usual Haar integral on $L^\infty(\dualg)$. In this picture,
$\M_{\tilde\f}$ is identified with $L^\infty(\dualg)\cap L^1(\dualg)$ and
$\hat{x}$ corresponds to the Fourier transform
of the associated function in $L^\infty(\dualg)\cap L^1(\dualg)$.

\begin{prop}\label{369} Let $G$ be a locally compact group. Then the following properties hold\textup:
\begin{enumerate}
\item[\textup{(i)}] The Fourier transform $\hat{x}$ belongs to $\cont_e(G)$ for all $x\in \M_{\tilde\f}$.
In particular, $\hat{x}$ is a continuous function.
\item[\textup{(ii)}] The Fourier transform of $\lambda(f)$ is equal to $f$ for all $f\in \cont_e(G)$.
\item[\textup{(iii)}] If we equip $\cont_e(G)$ with the usual convolution of functions and the involution $f^*(t):=\Delta(t^{-1})\overline{f(t^{-1})}$, then $\cont_e(G)$ becomes a $*$-algebra
and the map
$$\M_{\tilde\f}\ni x\mapsto \hat{x}\in \cont_e(G)$$
is an isomorphism of $*$-algebras. The inverse is given by the map $f\mapsto \lambda(f)$. In particular, we have
$$(xy)\hat{}=\hat{x}*\hat{y},\quad\mbox{and}\quad (x^*)\hat{}=\hat{x}^*\quad\mbox{for all }x,y\in\M_{\tilde\f}.$$
\item[\textup{(iv)}] Suppose that $x\in \M_{\tilde\f}$ and that the function $t\mapsto \hat{x}(t)\lambda_t\in \Ls\bigl(L^2(G)\bigr)$
is integrable in the weak topology of $\Ls\bigl(L^2(G)\bigr)$. Then
$$\int_G^\mathrm{w}\hat{x}(t)\lambda_t \dd{t}=x,$$
where the superscript $``\mathrm{w}"$ above stands for integral in the weak topology.
\end{enumerate}
\end{prop}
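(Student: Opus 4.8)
The plan is to deduce everything from the facts assembled in Section~\ref{S:plancherel_weight}: that $\M_{\tilde\f}=\lambda(\cont_e(G))$, that $\tilde\f$ acts on $\cont_e(G)$ by $\lambda(g)\mapsto g(e)$, the polarization identity $\tilde\f\bigl(\lambda(\xi)^*\lambda(\eta)\bigr)=\braket{\xi}{\eta}$ for left-bounded $\xi,\eta$, and the formula $(\xi^**\eta)(t)=\braket{\xi}{V_t\eta}$. First, (i) will follow from (ii): any $x\in\M_{\tilde\f}$ equals $\lambda(f)$ for some $f\in\cont_e(G)$, whence $\hat x=f\in\cont_e(G)$, and the functions in $\cont_e(G)$ are continuous. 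To prove (ii) I would write a given $f\in\cont_e(G)$ as a finite sum $f=\sum_{j}\xi_j^**\eta_j$ with $\xi_j,\eta_j$ left bounded, so $\lambda(f)=\sum_j\lambda(\xi_j)^*\lambda(\eta_j)$. The computational heart is the identity $\lambda(\xi)\lambda_t=\Delta(t)^{-1}\lambda(R_{t^{-1}}\xi)$ for left-bounded $\xi$, where $(R_a\xi)(s):=\xi(sa)$; this is checked directly on $\cont_c(G)$ using the transformation rule of the modular function, and it shows in passing that $R_{t^{-1}}\xi$ is again left bounded. Taking adjoints (recall $\lambda_{t^{-1}}^*=\lambda_t$) gives $\lambda_{t^{-1}}\lambda(\xi)^*=\Delta(t)^{-1}\lambda(R_{t^{-1}}\xi)^*$, so
$$\widehat{\lambda(f)}(t)=\tilde\f\bigl(\lambda_{t^{-1}}\lambda(f)\bigr)=\sum_j\Delta(t)^{-1}\braket{R_{t^{-1}}\xi_j}{\eta_j},$$
and one final change of variables turns $\Delta(t)^{-1}\braket{R_{t^{-1}}\xi}{\eta}$ into $\braket{\xi}{V_t\eta}=(\xi^**\eta)(t)$. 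Summing over $j$ yields $\widehat{\lambda(f)}=f$.

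For (iii), items (i) and (ii) say precisely that $\lambda\colon\cont_e(G)\to\M_{\tilde\f}$ and the Fourier transform $\M_{\tilde\f}\to\cont_e(G)$ are mutually inverse linear bijections. Since $\M_{\tilde\f}$ is a $*$-algebra, transporting its structure along this bijection automatically makes $\cont_e(G)$ a $*$-algebra and the Fourier transform a $*$-isomorphism, so it only remains to identify the transported operations. Using the standard identities $\lambda(\xi)\lambda(\eta)=\lambda(\xi*\eta)$ and $\lambda(\xi)^*=\lambda(\xi^*)$ for left-bounded functions, one gets, for a spanning element $f=\xi^**\eta$, that $\lambda(f)^*=\lambda(\eta)^*\lambda(\xi)=\lambda(\eta^**\xi)$; a direct change of variables shows $\eta^**\xi=f^*$ with $f^*(t)=\Delta(t^{-1})\overline{f(t^{-1})}$, and similarly that the transported product of $f$ and $g$ is the convolution $f*g$. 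Extending by (conjugate-)linearity shows $\cont_e(G)$ is closed under convolution and this involution and gives $(xy)\hat{}=\hat x*\hat y$ and $(x^*)\hat{}=\hat x^*$.

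For (iv), put $T:=\int_G^{\mathrm{w}}\hat x(t)\lambda_t\dd t$; by hypothesis $T$ is the bounded operator on $L^2(G)$ with $\braket{\zeta}{T\omega}=\int_G\hat x(t)\braket{\zeta}{\lambda_t\omega}\dd t$ for all $\zeta,\omega\in L^2(G)$, and by (ii) we have $x=\lambda(\hat x)$. Since $\braket{\cdot}{T\cdot}$ and $\braket{\cdot}{x\cdot}$ are bounded sesquilinear forms on $L^2(G)$, it suffices to check $\braket{\zeta}{x\omega}=\braket{\zeta}{T\omega}$ for $\zeta,\omega$ in the dense subspace $\cont_c(G)$. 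Writing $\hat x=\sum_j\xi_j^**\eta_j$, one has $\braket{\zeta}{x\omega}=\sum_j\braket{\xi_j*\zeta}{\eta_j*\omega}$, while, since $t\mapsto\braket{\zeta}{\lambda_t\omega}$ is continuous with support in the compact set $\mathrm{supp}\,\zeta\cdot(\mathrm{supp}\,\omega)^{-1}$, the integral on the other side splits as $\sum_j\int_G(\xi_j^**\eta_j)(t)\braket{\zeta}{\lambda_t\omega}\dd t$. So everything reduces to the identity $\braket{\xi*\zeta}{\eta*\omega}=\int_G(\xi^**\eta)(t)\braket{\zeta}{\lambda_t\omega}\dd t$ for left-bounded $\xi,\eta$ and $\zeta,\omega\in\cont_c(G)$. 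Inserting $(\xi^**\eta)(t)=\braket{\xi}{V_t\eta}$ and $\braket{\zeta}{\lambda_t\omega}=\int_G\overline{\zeta(s)}\omega(t^{-1}s)\dd s$, both sides become iterated integrals over three copies of $G$ which are carried into one another by an elementary Haar-invariant change of variables; the rearrangement is justified by Fubini because the integrand is absolutely integrable over $G^3$, which one checks by integrating first over the variable that ranges over $\mathrm{supp}\,\zeta\cdot(\mathrm{supp}\,\omega)^{-1}$ and then estimating by Cauchy--Schwarz, using continuity of $\Delta$. This yields $T=x$.

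The step I expect to cause the most trouble is this last one: because $\hat x$ is in general neither integrable nor compactly supported on $G$, the classical formula $\lambda(f)=\int_G f(t)\lambda_t\dd t$ (valid for $f\in\cont_c(G)$) is not directly available, so the equality $T=x$ has to be obtained through $\cont_c(G)$ test vectors and the Fubini argument above. Throughout (i)--(iii) the recurring nuisance is simply keeping track of the modular function across the various one-sided translations and substitutions.
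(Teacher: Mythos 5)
Your proof is correct and follows essentially the same route as the paper's: everything is reduced to the background facts $\M_{\tilde\f}=\lambda\bigl(\cont_e(G)\bigr)$ and $\tilde\f\bigl(\lambda(\xi)^*\lambda(\eta)\bigr)=\braket{\xi}{\eta}$, and item (iv) is verified against $x=\lambda(\hat{x})$ on $\cont_c(G)$ test vectors via a Fubini computation, your version merely being more explicit about the modular-function bookkeeping and the absolute-integrability justification. One minor caution: the auxiliary identity $\lambda(\xi)^*=\lambda(\xi^*)$ is not valid for an arbitrary left-bounded $\xi$ (the function $\xi^*$ need not even lie in $L^2(G)$), but your argument never actually needs it, since $\lambda(f)^*=\lambda(\eta)^*\lambda(\xi)=\lambda(\eta^**\xi)$ already follows from the relation $\lambda(\xi^**\eta)=\lambda(\xi)^*\lambda(\eta)$.
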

\begin{proof}  We already know that $\M_{\tilde\f}=\lambda\bigl(\cont_e(G)\bigr)$. Let $x=\lambda(f)$ with $f\in \cont_e(G)$. Note that
$\lambda_t^{-1}x=\lambda_t^{-1}\lambda(f)=\lambda(f_t),$ where $f_t$ denotes the function $f_t(s):=f(ts)$. Hence
$\hat{x}(t)=\tilde\f\bigl(\lambda(f_t)\bigr)=f_t(e)=f(t)$, that is, $\hat{x}=f$.
This proves (i) and (ii). If $f,g,\xi,\eta\in L^2(G)$ are left bounded, then $(f^**g)*(\xi^**\eta)=(\lambda(g)^*f)^**(\lambda(\xi)^*\eta)$.
Note that, given $x\in \Ls(G)$ and $\zeta\in L^2(G)$ left bounded, $x\zeta\in L^2(G)$ is left bounded and $\lambda(x\zeta)=x\lambda(\zeta)$.
It follows that $(f^**g)*(\xi^**\eta)\in \cont_e(G)$. This shows that $\cont_e(G)$ is an algebra with convolution. Note also that $(f^**g)^*=g^**f\in \cont_e(G)$,
and therefore $\cont_e(G)$ is a $*$-algebra. It is easy to see that the map
$\M_{\tilde\f}\ni x\mapsto \hat{x}\in \cont_e(G)$ preserves the $*$-algebra structures.
For example, to prove that $(xy)\hat{}=\hat{x}*\hat{y}$, take $f,g\in \cont_e(G)$ such that $x=\lambda(f)$ and $y=\lambda(g)$.
Then $(xy)\hat{}=\bigl(\lambda(f*g)\bigr)\hat{}=f*g=\hat{x}\hat{y}$. Item (ii) and the fact that any
$x\in\M_{\tilde\f}$ has the form $x=\lambda(f)$ show that the map $x\mapsto\hat{x}$ has $f\mapsto\lambda(f)$ as its inverse.
Finally, we prove (iv). Take $\xi,\eta\in \cont_c(G)$. Then
\begin{align*}
\left\<\xi\Bigl|\left(\int_G^\mathrm{w}\hat{x}(t)\lambda_t\dd{t}\right)\eta\right\>&=\int_G\hat{x}(t)\braket{\xi}{\lambda_t(\eta)}\dd{t}
                                                                \\&=\int_G\int_G\hat{x}(t)\overline{\xi(s)}\eta(t^{-1}s)\dd{t}\dd{s}
																\\&=\int_G\overline{\xi(s)}(\hat{x}*\eta)(s)\dd{s}
																\\&=\braket{\xi}{\lambda(\hat{x})\eta}
																   =\braket{\xi}{x\eta}.\tag*{\qedhere}
\end{align*}
\end{proof}

\section{The Fourier transform}

Throughout the rest of this paper we fix a locally compact group $G$ and a $C^*$-algebra $A$.

\begin{defi}\label{009} Let $a\in \M\bigl(A\otimes\csg\bigr)$ be an integrable element. The
\emph{Fourier coefficient} of $a$ at $t\in G$ is the element $\hat{a}(t)\in \M(A)$ defined by
$$\hat{a}(t):=(\id\otimes\f)\bigl((1\otimes\la_t^{-1})a\bigr).$$
The map $t\mapsto \hat{a}(t)$ from $G$ to $\M(A)$ is called the \emph{Fourier transform} of $a$.
\end{defi}

As already observed, $\la_s$ is an analytic element for all $s\in G$.
This implies that $(1\otimes\la_s)x\in \bar\M_{\id\otimes\f}$ whenever $x\in \bar\M_{\id\otimes\f}$ (see \cite[Proposition 3.28]{kustermans_vaes}). Thus the Fourier transform is well-defined.

Suppose that the group $G$ is Abelian. Then there is a canonical isomorphism $\M\bigl(A\otimes \csg\bigr)\cong \cont_b\bigl(\dualg,\M^{\st}(A)\bigr)$,
the space of bounded, strictly continuous functions $\dualg\to\M(A)$.
Under this identification, $\bigl((1\otimes\lambda_{t^{-1}})a\bigr)(\chi)=\braket{\chi}{t}a(\chi)$ for all $a\in \cont_b\bigl(\dualg,\M^{\st}(A)\bigr)$
and $\chi\in \dualg$. Moreover, by Proposition~\ref{integrable_commutative}, a positive element $a\in \M\bigl(A\otimes\csg\bigr)$
is integrable if and only if there is $b\in \M(A)$ such that the function $t\mapsto \theta\bigl(a(\chi)\bigr)$ is integrable (in ordinary's sense)
and $\int_{\dualg}\theta(a(\chi))\dd{\chi}=\theta(b)$. It is also the content of Proposition~\ref{integrable_commutative} that
this notion of integrability is equivalent to Exel's notion of
strict unconditional integrability (essentially this fact has been also observed by Marc Rieffel; see \cite[Theorem 3.4, Proposition 4.4]{rieffel}).
In other words, $a\in\M\bigl(A\otimes\csg\bigr)^+$ is integrable if
and only if the corresponding function $\chi\mapsto a(\chi)$ in
$\cont_b\bigl(\dualg,\M^\st(A)\bigr)$ is strictly-unconditionally integrable. Furthermore,
in this case $(\id\otimes\f)(a)$ coincides with the strict unconditional integral $\int_{\dualg}^\su a(\chi)\dd{\chi}$.

We conclude that, if $G$ is Abelian, then the Fourier transform of an integrable element $a\in \M\bigl(A\otimes\csg\bigr)\cong\cont_b\bigl(\dualg,\M^\st(A)\bigr)$
coincides with the Fourier transform defined by Exel in \cite{exel_unconditional}:
$$\hat{a}(t)=\int_{\dualg}^\su\braket{\chi}{t} a(\chi)\dd{\chi}.$$

\section{Fourier inversion Theorem}

We are ready to prove the main result of this paper:

\begin{teor}[The Fourier inversion Theorem]\label{034} Let $G$ be a locally compact group and let $A$ be a $C^*$-algebra. Let $a\in \M\bigl(A\otimes \csg\bigr)$ be an integrable element and suppose that the function $G\ni t\mapsto \hat{a}(t)\otimes\la_t\in \M\bigl(A\otimes \csg\bigr)$ is strictly-unconditionally integrable. Then we have
$$a=\int_G^\su \hat{a}(t)\otimes\lambda_t\dd{t}.$$
\end{teor}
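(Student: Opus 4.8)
The plan is to reduce the statement to a scalar computation by slicing with positive linear functionals $\theta\in A^*_+$, using the fact (stated after Definition~\ref{integrable_elements}) that an element of $\M(A\otimes\csg)^+$ is determined by its images under the slice maps $\theta\otimes\id$, together with the compatibility of slicing with the weight $\f$. First I would observe that both sides of the claimed identity live in $\M(A\otimes\csg)$ and that the left-hand side $a$ is integrable by hypothesis, while the right-hand side exists as a strict unconditional integral by hypothesis; so it suffices to show that for every $\theta\in A^*_+$ the images $(\theta\otimes\id)(a)$ and $(\theta\otimes\id)\bigl(\int_G^\su \hat a(t)\otimes\la_t\dd t\bigr)$ agree in $\csg$. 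Since the slice map $\theta\otimes\id$ is bounded and strictly continuous, it commutes with the strict unconditional integral, giving
$$(\theta\otimes\id)\Bigl(\int_G^\su \hat a(t)\otimes\la_t\dd t\Bigr)=\int_G^\su (\theta\otimes\id)\bigl(\hat a(t)\otimes\la_t\bigr)\dd t=\int_G^\su \theta\bigl(\hat a(t)\bigr)\la_t\dd t.$$

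Next I would identify the scalar-coefficient function. By Definition~\ref{009}, $\hat a(t)=(\id\otimes\f)\bigl((1\otimes\la_t^{-1})a\bigr)$, and applying $\theta$ and using that $\theta$ commutes with the strict limit defining $(\id\otimes\f)$ (this is the content of the discussion around Definition~\ref{integrable_elements} and Proposition~\ref{integrable_commutative}), one gets $\theta\bigl(\hat a(t)\bigr)=\f\bigl((\theta\otimes\id)\bigl((1\otimes\la_t^{-1})a\bigr)\bigr)=\f\bigl(\la_t^{-1}\,(\theta\otimes\id)(a)\bigr)$, where the last step uses that $\theta\otimes\id$ is $\M(\csg)$-bilinear in the sense $(\theta\otimes\id)\bigl((1\otimes c)x\bigr)=c\,(\theta\otimes\id)(x)$. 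Writing $x_\theta:=(\theta\otimes\id)(a)\in\bar\M_\f$, this says $\theta\bigl(\hat a(t)\bigr)=\f(\la_t^{-1}x_\theta)=\widehat{x_\theta}(t)$, the scalar Fourier transform of Definition~\ref{367}. So the problem collapses to: show $x_\theta=\int_G^\su \widehat{x_\theta}(t)\la_t\dd t$ in $\csg$ for every such $x_\theta\in\bar\M_\f$ whose Fourier transform satisfies the appropriate integrability.

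For the scalar statement I would invoke Proposition~\ref{369}(iv): whenever the function $t\mapsto \widehat{x_\theta}(t)\la_t$ is weakly integrable, its weak integral equals $x_\theta$. The key point is that strict-unconditional integrability of $t\mapsto\hat a(t)\otimes\la_t$ in $\M(A\otimes\csg)$ implies, after slicing, unconditional (hence weak) integrability of $t\mapsto\theta\bigl(\hat a(t)\bigr)\la_t=\widehat{x_\theta}(t)\la_t$ in $\Ls(L^2(G))$, and that the strict unconditional integral maps under $\theta\otimes\id$ to this weak integral — this follows because a strict-unconditional net of partial integrals over relatively compact sets is sent by the bounded map $\theta\otimes\id$ to the corresponding net in $\csg\subseteq\Ls(L^2(G))$, which then converges in particular weakly, and a weakly convergent net of integrals over an exhausting family computes the weak integral. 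Thus $\int_G^\su\theta\bigl(\hat a(t)\bigr)\la_t\dd t=\int_G^{\mathrm w}\widehat{x_\theta}(t)\la_t\dd t=x_\theta=(\theta\otimes\id)(a)$, and since this holds for all $\theta\in A^*_+$ we conclude $a=\int_G^\su\hat a(t)\otimes\la_t\dd t$.

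The main obstacle I anticipate is the bookkeeping in the two places where one interchanges a slice map with a limit: first, that $\theta\bigl((\id\otimes\f)(y)\bigr)=\f\bigl((\theta\otimes\id)(y)\bigr)$ for integrable $y$ — one must check $(\theta\otimes\id)(y)$ really lands in $\bar\M_\f$ and that the relevant nets indexed by $\G_\f$ are compatible, which is exactly the Kustermans–Vaes machinery cited; and second, that slicing a strict unconditional integral yields the weak integral of the sliced function, i.e.\ that passing from the $\M(A\otimes\csg)$-valued net of partial integrals to its $\Ls(L^2(G))$-valued image is legitimate and that weak convergence of the latter suffices to apply Proposition~\ref{369}(iv). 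Both are conceptually routine given the results already established, but they are where the argument must be made precise; none of it requires Naimark's or Stone's theorems, in keeping with the paper's stated philosophy.
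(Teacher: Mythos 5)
Your proposal is correct and follows essentially the same route as the paper's proof: slice with functionals $\theta$, pull $\theta\otimes\id$ through the strict unconditional integral, identify $\theta\bigl(\hat a(t)\bigr)=\f\bigl(\la_t^{-1}(\theta\otimes\id)(a)\bigr)$ as the scalar Fourier transform of $(\theta\otimes\id)(a)$, and conclude via Proposition~\ref{369}(iv) because strict convergence implies weak convergence. The only cosmetic difference is that you slice with $\theta\in A^*_+$ while the paper uses arbitrary $\theta\in A^*$, which changes nothing since positive functionals span the dual.
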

\begin{proof} Take any continuous linear functional $\theta\in A^*$ on $A$ and define the element $x:=(\theta\otimes\id)(a)\in \M\bigl(\csg\bigr)$.
Since $a$ is integrable, we have $x\in \bar\M_\f$. Moreover,
\begin{align*}
(\theta\otimes\id)\left(\int_G^\su\!\hat{a}(t)\otimes\lambda_t\dd{t}\right)&=\int_G^\su\!\theta\bigl(\hat{a}(t)\bigr)\lambda_t\dd{t}
                \\&=\int_G^\su\!\theta\Bigl((\id\otimes\f)\bigl((1_A\otimes\lambda_t^{-1})a\bigr)\Bigr)\lambda_t\dd{t}
															 \\&=\int_G^\su\!\f\bigl(\lambda_t^{-1}(\theta\otimes\id)(a)\bigr)\lambda_t\dd{t}
															 \\&=\int_G^\su\!\f(\lambda_t^{-1}x)\lambda_t\dd{t}
														     \\&=\int_G^\su\!\hat{x}(t)\lambda_t\dd{t}.
\end{align*}
Since strict convergence is stronger than weak convergence, the above equals $x=(\theta\otimes\id)(a)$ by Proposition~\ref{369}(iv).
The result follows because $\theta\in A^*$ is arbitrary.
\end{proof}

Theorem~\ref{034} extends Exel's operator-valued version of Fourier's inversion Theorem in \cite{exel_unconditional}
to non-Abelian groups. Assume that $G$ is Abelian.
Then, under the usual identification $\M\bigl(A\otimes \csg\bigr)\cong \cont_b\bigl(\dualg,\M^{\st}(A)\bigr)$, the element $\hat{a}(t)\otimes\lambda_t$ corresponds to the function
$\chi\mapsto \overline{\braket{\chi}{t}}\hat{a}(t)$. Thus Theorem~\ref{034} says that
$$\int_G^\su\overline{\braket{\chi}{t}}\hat{a}(t)\dd{t}=a(\chi)$$
whenever $a$ is integrable and the strict unconditional integral above exists.
The Fourier transform $\hat{a}$ in this case is
given by $\hat{a}(t)=\int_{\dualg}^\su\braket{\eta}{t}a(\eta)\dd{\eta}$.
Thus we can rewrite the equation above in the form of a generalized Fourier inversion formula:
$$\int_G^\su\overline{\braket{\chi}{t}}\left(\int_{\dualg}^\su\braket{\eta}{t}a(\eta)\dd{\eta}\right)\dd{t}=a(\chi).$$
As already mentioned in the introduction, Exel's version of Fourier's inversion Theorem starts with a
compactly supported, strictly continuous, positive definite function $f:G\to\M(A)$.
Apparently, our version requires no positivity condition on the functions involved. However, we are in fact
assuming a positivity condition because integrable elements are defined in terms of positive elements.

In order to compare our version with Exel's one, let us first recall that a function $f:G\to\M(A)$ is \emph{positive definite} if
for every finite subset $\{t_1,\ldots,t_n\}$ of $G$, the matrix $\bigl(f(t_i^{-1}t_j)\bigr)_{i,j}$ is positive in the
$C^*$-algebra $\Mat_n\bigl(\M(A)\bigr)$ of $n\times n$ matrices with entries in $\M(A)$. We may assume without loss of generality that
$A$ is a nondegenerate $C^*$-subalgebra of $\Ls(H)$ for some Hilbert space $H$.

The following result characterizes operator-valued, positive definite, weakly continuous functions.

\begin{prop}\label{FT:004} Let $A$ be a $C^*$-algebra which is faithfully and nondegenerately represented in $\Ls(H)$ for some Hilbert space $H$.
For a weakly continuous function $f:G\to \M(A)\sbe\Ls(H)$, the following assertions are equivalent\textup:
\begin{enumerate}
\item[\textup{(i)}] $f$ is positive definite;
\item[\textup{(ii)}] $f$ has the form $f(t)=S^*u_tS$ for some strongly continuous unitary representation $u:G\to\Ls(K)$
on some Hilbert space $K$ and some bounded linear operator $S:H\to K$;
\item[\textup{(iii)}] there is a strict completely positive map (see \cite{lance} for the precise definition) $F:C^*(G)\to \M(A)$ such that $f(t)=\tilde F(t)$, where
$\tilde F$ denotes the strictly continuous extension of $F$ to $\M(C^*(G))$ and we identify $G\sbe \M(C^*(G))$ in the usual way;
\item[\textup{(iv)}] $f$ has the form $f(t)=T^*w_t T$ for some strongly continuous unitary representation $w:G\to\Ls(\E)$ on some Hilbert
$A$-module $\E$ and some adjointable operator $T:A\to \E$.
\end{enumerate}
In this case, $f:G\to \M(A)$ is bounded and strictly continuous,
$f(e)$ is a positive operator, $f(t^{-1})^*=f(t)$ and $\|f(t)\|\leq\|f(e)\|$ for all $t\in G$.
Moreover, $f:G\to \Ls(H)$ is left strongly-uniformly continuous, that is, for $\xi\in H$,
$\|f(ts)\xi- f(t)\xi\|$ converges to zero uniformly in $t$ as $s$ converges to $e$.
\end{prop}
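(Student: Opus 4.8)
The plan is to prove the chain of implications $(i)\Rightarrow(iv)\Rightarrow(ii)\Rightarrow(i)$, together with $(iv)\Leftrightarrow(iii)$, so that all four conditions become equivalent, and then to read off the supplementary properties from a dilation $f(t)=T^*w_tT$ (equivalently $f(t)=S^*u_tS$). Of these, the only implication that is not essentially formal is $(i)\Rightarrow(iv)$, which is a Gelfand--Naimark--Segal type construction carried out over the coefficient algebra $A$; the remaining steps are an interior tensor product, a one-line matrix computation, and the standard correspondence between strongly continuous unitary representations of $G$ and nondegenerate $*$-representations of $C^*(G)$.

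For $(i)\Rightarrow(iv)$: let $\E_0$ be the right $A$-module of finitely supported functions $G\to A$, with $A$-module generators $\delta_t\!\cdot\! a$ ($t\in G$, $a\in A$), equipped with the $A$-valued form $\bigl\langle\sum_i\delta_{t_i}a_i,\sum_j\delta_{s_j}b_j\bigr\rangle:=\sum_{i,j}a_i^*f(t_i^{-1}s_j)b_j$. Positive definiteness of $f$ says precisely that $\langle x,x\rangle\in A^+$ for every $x\in\E_0$: if $x=\sum_i\delta_{t_i}a_i$ and one writes the positive element $\bigl(f(t_i^{-1}t_j)\bigr)\in\Mat_n(\M(A))$ as $N^*N$ with $N$ over $\M(A)$, then $\langle x,x\rangle=(Na)^*(Na)$ with $a=(a_1,\dots,a_n)\in A^n$. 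Passing to the quotient by $\{x:\langle x,x\rangle=0\}$ and completing in the norm $\|x\|:=\|\langle x,x\rangle\|^{1/2}$ produces a Hilbert $A$-module $\E$. Left translation $w_t(\delta_s a):=\delta_{ts}a$ preserves the form (because $f((ts)^{-1}(ts'))=f(s^{-1}s')$), hence descends to an adjointable unitary on $\E$ with $w_t^*=w_{t^{-1}}$, and $t\mapsto w_t$ is a representation; the map $Ta:=\delta_e a$ is adjointable $A\to\E$ with $T^*(\delta_s b)=f(s)b$, so $T^*w_tT(a)=f(t)a$, i.e.\ $f(t)=T^*w_tT$ in $\Ls(A)=\M(A)$. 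The point requiring genuine work is the strong continuity of $t\mapsto w_t$: on the total set $\{w_rTa\}$ one computes $\|w_s(w_rTa)-w_rTa\|^2=\bigl\|a^*\bigl(2f(e)-f(r^{-1}sr)-f(r^{-1}s^{-1}r)\bigr)a\bigr\|$, and here one combines the weak continuity of $f$ with the positivity of the ``positive-definiteness gap'' $2f(e)-f(r^{-1}sr)-f(r^{-1}s^{-1}r)$ (a compression of the positive matrix $\bigl(f(t_i^{-1}t_j)\bigr)$ for $\{r,sr\}$); together with the uniform boundedness of the $w_t$ this forces $\|w_s\zeta-w_r\zeta\|\to0$ for all $\zeta\in\E$ (passing, if necessary, to the closed submodule generated by $\{w_tTa\}$, on which strong continuity then holds automatically).

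For $(iv)\Rightarrow(ii)$: represent $A$ faithfully and nondegenerately on $H$ and form the interior tensor product $K:=\E\otimes_AH$; put $u_t:=w_t\otimes 1$ and $S:=T\otimes 1\colon H\cong A\otimes_AH\to K$. Then $u$ is a strongly continuous unitary representation of $G$ on $K$ (on elementary tensors $\|(w_t\zeta-w_s\zeta)\otimes\xi\|\le\|w_t\zeta-w_s\zeta\|\,\|\xi\|$, and the $u_t$ are uniformly bounded), $S$ is bounded, and $S^*u_tS=(T^*w_tT)\otimes 1=f(t)$ under the identification $A\otimes_AH\cong H$. For $(ii)\Rightarrow(i)$: given $t_1,\dots,t_n\in G$ and $\xi_1,\dots,\xi_n\in H$, $\sum_{i,j}\langle\xi_i,f(t_i^{-1}t_j)\xi_j\rangle=\bigl\|\sum_iu_{t_i}S\xi_i\bigr\|^2\ge0$, so $\bigl(f(t_i^{-1}t_j)\bigr)$ is positive in $\Ls(H^n)$ and hence in the unital $C^*$-subalgebra $\Mat_n(\M(A))\subseteq\Ls(H^n)$. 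For $(iv)\Leftrightarrow(iii)$: given $(iv)$, integrate $w$ to a nondegenerate $*$-representation $\rho\colon C^*(G)\to\Ls(\E)$ and set $F:=T^*\rho(\cdot)T\colon C^*(G)\to\M(A)$; this is completely positive, and it is strict because $\rho$ is nondegenerate and $T$ adjointable (so $F(e_i)\to T^*T$ strictly for an approximate unit $(e_i)$ of $C^*(G)$), hence it has a strictly continuous extension $\tilde F$ with $\tilde F(t)=T^*\tilde\rho(t)T=T^*w_tT=f(t)$. Conversely, from $(iii)$, the KSGNS (Kasparov--Stinespring) dilation of the strict completely positive map $F$ gives a Hilbert $A$-module $\E$, a nondegenerate $*$-representation $\rho\colon C^*(G)\to\Ls(\E)$ and an adjointable $T\colon A\to\E$ with $F=T^*\rho(\cdot)T$; disintegrating $\rho$ produces a strongly continuous unitary representation $w$ of $G$ on $\E$, and $f(t)=\tilde F(t)=T^*w_tT$, which is $(iv)$.

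Finally, the supplementary assertions all follow from $f(t)=S^*u_tS$: $f(e)=S^*S\ge0$; $f(t^{-1})=S^*u_t^*S=f(t)^*$; $\|f(t)\|=\|S^*u_tS\|\le\|S\|^2=\|S^*S\|=\|f(e)\|$, using unitarity of $u_t$; and left strong-uniform continuity because $\|f(ts)\xi-f(t)\xi\|=\|S^*u_t(u_s-1)S\xi\|\le\|S\|\,\|(u_s-1)S\xi\|\to0$ as $s\to e$, with a bound independent of $t$. Strict continuity of $f$ (as a map into $\M(\K(H))\cong\Ls(H)$) comes from $(iii)$: one has $f=\tilde F\circ\iota$, where $\iota\colon G\to\M(C^*(G))$ is the canonical embedding, which is strictly continuous because for $g\in\cont_c(G)$ the maps $t\mapsto\delta_t*g$ and $t\mapsto g*\delta_t$ are norm continuous into $L^1(G)\subseteq C^*(G)$; since $\iota(G)$ is bounded and $\tilde F$ is strictly continuous, so is $f$. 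The step I expect to be the main obstacle is the strong continuity of $w$ in $(i)\Rightarrow(iv)$: one only assumes $f$ weakly continuous, and upgrading this to norm continuity of $t\mapsto w_t\zeta$ on the Hilbert module is exactly the place where positive definiteness must be used in an essential, non-formal way; once the dilation is in hand, everything else is routine.
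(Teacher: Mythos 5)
Your overall plan --- constructing the Hilbert $A$-module dilation directly from positive definiteness instead of quoting Naimark's theorem, then getting (ii) by an interior tensor product and (iii)$\Leftrightarrow$(iv) by integration/KSGNS --- is a genuinely different route from the paper's (which does (i)$\Leftrightarrow$(ii) by citing Exel's Naimark theorem and then (ii)$\Rightarrow$(iii)$\Rightarrow$(iv)$\Rightarrow$(i)), and most of its steps are correct. But the step you yourself single out as the main obstacle, the strong continuity of $w$ in (i)$\Rightarrow$(iv), is not closed by the argument you sketch, and this breaks the whole chain: without it, (i) is not shown to imply any of (ii)--(iv). Your identity $\|w_s(w_rTa)-w_rTa\|_{\E}^2=\bigl\|a^*\bigl(2f(e)-f(r^{-1}sr)-f(r^{-1}s^{-1}r)\bigr)a\bigr\|_A$ is correct, and the element inside is indeed positive; but weak continuity of $f$ only gives $\braket{\xi}{a^*\bigl(2f(e)-f(r^{-1}sr)-f(r^{-1}s^{-1}r)\bigr)a\,\xi}\to 0$ for each fixed $\xi\in H$, i.e.\ convergence of these positive operators to $0$ in the weak (hence, by the square-root trick, strong) operator topology of $\Ls(H)$. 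The Hilbert-module norm, however, is the $C^*$-norm of $A$, and a net of positive operators can tend to $0$ weakly while keeping norm $1$; neither positivity of the ``gap'' nor uniform boundedness of the $w_t$ bridges this. What you actually need is $\|(f(e)-f(u))a\|\to 0$ as $u\to e$, i.e.\ strict continuity of $f$ at $e$ --- which is one of the conclusions of the proposition and is exactly the nontrivial weak-to-strict upgrade the statement encodes; assuming it here would be circular.

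The paper's route is designed to avoid precisely this point: Naimark's theorem performs the same GNS computation over $\C$ rather than over $A$, where the norm of a GNS vector is the scalar matrix coefficient $\braket{\xi}{(2f(e)-f(u)-f(u^{-1}))\xi}$, so weak continuity of $f$ really does yield strong continuity of the Hilbert-space representation $u$; then (ii)$\Rightarrow$(iii) passes to $C^*(G)$, and (iii)$\Rightarrow$(iv) is Lance's KSGNS theorem, where the strong continuity of $w_t=\tilde w(t)$ on $\E$ comes from nondegeneracy of $w:C^*(G)\to\Ls(\E)$ together with norm continuity of $t\mapsto \delta_t*x$ in $C^*(G)$, not from any continuity of $f$; strict continuity of $f$ then falls out at the end (much as in your own last paragraph) instead of being needed at the start. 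Your remaining steps --- (iv)$\Rightarrow$(ii) via $\E\otimes_A H$, (ii)$\Rightarrow$(i), (iv)$\Leftrightarrow$(iii), the boundedness and symmetry estimates, and the derivation of strict continuity of $f$ from (iii) --- are fine (for the adjointability of $T$ it is worth adding the one-line remark that $\|T^*x\|=\sup_{\|a\|\le 1}\|\braket{Ta}{x}\|\le\|T\|\,\|x\|$ on the dense submodule). To repair the proposal, either route (i)$\Rightarrow$(iv) through the Hilbert-space dilation and $C^*(G)$ as the paper does, or supply an independent proof that $u\mapsto\bigl\|a^*\bigl(2f(e)-f(u)-f(u^{-1})\bigr)a\bigr\|$ vanishes as $u\to e$; as written, that norm convergence is asserted rather than proved.
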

\begin{proof} The equivalence (i)$\Leftrightarrow$(ii) is Naimark's theorem (see \cite[Theorem 3.2]{exel_unconditional}).
Assume that (ii) holds. Then we can define $F(x):=S^*u(x)S$ for all $x\in C^*(G)$,
where we abuse the notation and write $u:C^*(G)\to \Ls(K)$ for the integrated form of $u:G\to\Ls(K)$.
Recall that $u(x)=\int_G x(t)u_t\dd{t}$ for all $x\in L^1(G)$. Note that $F(x)\in\M(A)$ because $f(t)\in \M(A)$ for all $t\in G$.
Since $u$ is a nondegenerate representation of $C^*(G)$, it follows that $F:C^*(G)\to \M(A)$
is a strict completely positive map \cite[Proposition 5.5]{lance}.  The strictly continuous extension of $F$ is given
by $\tilde F(x)=S^*\tilde u(x) S$, where $\tilde u:\M(C^*(G))\to\Ls(K)$ denotes the strictly continuous extension of $u$.
Hence $\tilde F(t)=S^*\tilde u(t) S=S^* u_t S=f(t)$ for all $t\in G$. Thus (ii) implies (iii). Now assume that (iii) is true.
Theorem 5.6 in \cite{lance} implies that there is a Hilbert $A$-module $\E$, a
nondegenerate $*$-homomorphism $w:C^*(G)\to \Ls(\E)$ and an adjointable operator $T:A\to \E$ such that $F(x)=T^*w(x) T$ for all $x\in C^*(G)$.
Defining $w_t:=\tilde w(t)$ to be the unitary representation of $G$ corresponding to $w$, we get item (iv). Finally, it is easy
to see that any function $f(t)=T^* w_t T$ as in (iv) is positive definite, so that (iv) implies (i). Therefore
all the four items are equivalent. The last assertion follows directly from (iv).
To prove the last assertion, take any $\xi\in H$. Using (ii), we get
$$\|f(ts)\xi-f(t)\xi\|\leq \|S\|\|u_s\eta-\eta\|$$
for all $t,s\in G$, where $\eta:=S\xi\in H$. Since $u$ is strongly continuous, it follows that $\|f(ts)\xi-f(t)\xi\|$ converges to zero
uniformly in $t$ as $s$ converges to $e$.
\end{proof}

\begin{obs}\label{warning on uniform continuity}
Let notation be as in Proposition~\ref{FT:004}. In general, it is not true that a weakly continuous, positive definite function
$f:G\to \Ls(H)$ is right strongly-uniformly continuous, that is, in general, given $\xi\in H$, $\|f(st)\xi-f(t)\xi\|$ does not converge to zero uniformly in $t$ as $s$ converges to $e$. Indeed, note that any unitary representation $u:G\to \Ls(H)$ is a positive definite function.
However, the left regular representation $\la:G\to \Ls(L^2(G))$ is not left strongly-uniformly continuous, unless $G$ has equivalent left and right uniform structures (see \cite[20.30]{hewitt_ross1}).
Of course, if the uniform structures of $G$ are equivalent, then the notions of left and right uniform continuity are equivalent, and we only speak of uniform continuity in this case meaning both left and right uniform continuity. Moreover, in this case, an analogous argument to that given in the proof of Proposition~\ref{FT:004} shows that any strictly continuous, positive definite function $f:G\to \M(A)$ is automatically (left and right) strictly-uniformly continuous, that is, for every $a\in A$,
all the expressions $\|f(ts)a-f(t)a\|$, $\|f(st)a-f(t)a\|$, $\|af(ts)-af(t)\|$ and $\|af(st)-af(t)\|$ converge to zero uniformly in $t$ as $s$ converges to $e$.
\end{obs}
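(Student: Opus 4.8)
The remark makes two essentially independent claims, which I would prove separately. \emph{First:} a weakly continuous, positive definite $f\colon G\to\Ls(H)$ need not be right strongly-uniformly continuous. The plan here is to exhibit the left regular representation $\la$ as a counterexample whenever the left and right uniformities of $G$ disagree, feeding in the description of that situation from \cite[20.30]{hewitt_ross1}. \emph{Second:} when those uniformities agree, every strictly continuous, positive definite $f\colon G\to\M(A)$ is two-sidedly strictly-uniformly continuous. Here I would invoke the structure theorem, Proposition~\ref{FT:004}(iv), and re-run the estimate from its proof, the hypothesis on $G$ being used precisely to push the conjugates $t^{-1}st$ into a prescribed neighbourhood of $e$ uniformly in $t$.

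For the first claim, recall that every unitary representation is positive definite (for $t_1,\dots,t_n\in G$ the operator matrix $\bigl(u_{t_i^{-1}t_j}\bigr)_{i,j}=\bigl(u_{t_i}^*u_{t_j}\bigr)_{i,j}$ is manifestly positive in $\Mat_n\bigl(\Ls(H)\bigr)$). In particular $\la\colon G\to\Ls\bigl(L^2(G)\bigr)$ is a strongly --- hence weakly --- continuous positive definite function. Since each $\la_t$ is unitary and $\la_s\la_t=\la_t\la_{t^{-1}st}$, one gets
$$\|\la_{st}\xi-\la_t\xi\|=\|\la_{t^{-1}st}\xi-\xi\|\qquad(\xi\in L^2(G),\ s,t\in G).$$
Now suppose the uniformities of $G$ are inequivalent. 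By \cite[20.30]{hewitt_ross1} there is a neighbourhood $V_0$ of $e$ such that for every neighbourhood $W$ of $e$ there exist $s\in W$ and $t\in G$ with $t^{-1}st\notin V_0$. Choose a symmetric open neighbourhood $U$ of $e$ with $\overline U$ compact and $U\!\cdot\!U\sbe V_0$, and set $\xi:=|U|^{-1/2}\mathbf 1_U$, a unit vector of $L^2(G)$. If $g\notin U\!\cdot\!U$ then $gU\cap U=\emptyset$, whence $\|\la_g\xi-\xi\|=\sqrt2$; taking $g=t^{-1}st$ shows that $\sup_{t\in G}\|\la_{st}\xi-\la_t\xi\|=\sqrt2$ for $s$ in an arbitrarily small neighbourhood of $e$, so $\la$ fails to be right strongly-uniformly continuous.

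For the second claim, use Proposition~\ref{FT:004}(iv) to write $f(t)=T^*w_tT$ with $w\colon G\to\Ls(\E)$ a strongly continuous unitary representation on a Hilbert $A$-module $\E$ and $T\colon A\to\E$ adjointable, and fix $a\in A$. The computation in the proof of Proposition~\ref{FT:004} already gives $\|f(ts)a-f(t)a\|=\|T^*w_t(w_s-1)Ta\|\le\|T\|\,\|(w_s-1)Ta\|$, which tends to $0$ uniformly in $t$ because $Ta$ is a fixed element of $\E$ and $w$ is strongly continuous. For the opposite shift I would use $w_sw_t=w_tw_{t^{-1}st}$, expand the Hilbert module inner product $\<(w_s-1)w_tTa,(w_s-1)w_tTa\>$, and apply Cauchy--Schwarz to obtain $\|(w_s-1)w_tTa\|^2\le 2\|Ta\|\,\|w_{t^{-1}st}Ta-Ta\|$, hence $\|f(st)a-f(t)a\|\le\|T\|\bigl(2\|Ta\|\,\|w_{t^{-1}st}Ta-Ta\|\bigr)^{1/2}$. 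Given $\varepsilon>0$, strong continuity of $w$ yields a neighbourhood $O$ of $e$ with $\|w_uTa-Ta\|<\varepsilon$ for $u\in O$, and equivalence of the uniformities yields a neighbourhood $W$ of $e$ with $t^{-1}Wt\sbe O$ for all $t$; then $\|f(st)a-f(t)a\|$ is uniformly small for $s\in W$. Finally, $f(g)^*=f(g^{-1})$ (Proposition~\ref{FT:004}) gives $af(g)=\bigl(f(g^{-1})a^*\bigr)^*$, so that $\|af(ts)-af(t)\|=\|f(s^{-1}t^{-1})a^*-f(t^{-1})a^*\|$ and $\|af(st)-af(t)\|=\|f(t^{-1}s^{-1})a^*-f(t^{-1})a^*\|$; these are instances of the two estimates just obtained (with $a^*,s^{-1},t^{-1}$ in place of $a,s,t$), hence they too tend to $0$ uniformly in $t$.

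The second claim is essentially bookkeeping once Proposition~\ref{FT:004} is available: two applications of strong continuity of $w$, together with the observation that ``equivalent uniformities'' says exactly that $t^{-1}Wt$ can be squeezed into any prescribed neighbourhood of $e$ uniformly in $t$. I expect the one genuinely delicate step to be the first claim --- one must produce a single vector $\xi\in L^2(G)$ whose translates $\la_g\xi$ stay a fixed positive distance from $\xi$ for \emph{all} $g$ outside one fixed neighbourhood of $e$, and then convert the ``inequivalent uniformities'' hypothesis into a supply of such $g$ of the form $t^{-1}st$ with $s$ arbitrarily close to $e$. With those ingredients the proof is short.
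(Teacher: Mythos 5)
Your proposal is correct and follows essentially the route the paper's remark sketches: the left regular representation (with the Hewitt--Ross characterization of equivalent left/right uniformities) as the counterexample for the negative claim, and a rerun of the Proposition~\ref{FT:004}(iv) estimate for the positive claim, where the conjugation identity $w_sw_t=w_tw_{t^{-1}st}$ is exactly what the equivalent-uniformities hypothesis is needed to control, and the remaining two expressions are reduced to these via $f(g)^*=f(g^{-1})$. The only simplification worth noting is that since $w_t$ is unitary you get $\|(w_s-1)w_tTa\|=\|(w_{t^{-1}st}-1)Ta\|$ directly, making the Cauchy--Schwarz detour unnecessary (though harmless); your explicit normalized-indicator vector giving $\|\lambda_g\xi-\xi\|=\sqrt{2}$ off a fixed neighbourhood supplies the detail the paper leaves to the citation.
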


Let $f\in \cont_c(G)$ and let $\rho(f)$ denote the operator on $L^2(G)$ given by right convolution with 
$f$: $\rho(f)\xi:=\xi*f$. Then $f$ is positive definite if and only the operator $\rho(f)$ is positive (see \cite[Proposition 7.1.9]{pedersen}).
Moreover, it is easy to see that $\rho(f)=J\la(Jf)J$, where $J$ is the anti-unitary operator on $L^2(G)$ defined by $J\xi(t):=\Delta(t)^{-\frac{1}{2}}\overline{\xi(t^{-1})}$. It follows that $f$ is positive definite if and only if $\la(Jf)$ is a positive operator.
Note that $Jf=\Delta^{-\frac{1}{2}}\cdot f$ if $f$ is positive definite. In particular, if $G$ is unimodular, $f$ is positive definite
if and only if $\la(f)$ is positive. In general, $\la(f)$ is positive if and only if $\Delta^{\frac{1}{2}}\cdot f$ is positive definite.

Lemma 7.2.4 in \cite{pedersen} shows that a function $f\in \cont_c(G)$ is positive definite if and only if $f=\eta*\tilde\eta$,
where $\eta$ is some right bounded function in $L^2(G)$ and $\tilde\eta(t):=\overline{\eta(t^{-1})}$ for all $t\in G$. Recall that
a function $\eta\in L^2(G)$ is called \emph{right bounded} if the map
$L^2(G)\supseteq\cont_c(G)\ni g\mapsto g*\eta\in L^2(G)$ extends
to a bounded operator on $L^2(G)$. Alternatively, $\eta$ is right bounded if and only if $J\eta$ is left bounded. This follows from the relation
$J(g*\eta)=(J\eta)*(Jg)$. Using the easily verified relation $J(\eta*\tilde\eta)=(J\eta)^**(J\eta)$ and the fact that
$\eta$ is right bounded if and only if $J\eta$ is left bounded, we get that $\la(f)$ is a positive operator
if and only if $f=\xi^**\xi$ for some left bounded function $\xi\in L^2(G)$. In particular, $f\in \cont_e(G)$ so that $\la(f)\in \M_\f$.
This proves the following result:

\begin{prop}\label{positive definite are integrable} Let $f$ be a function in $\cont_c(G)$.
If $\Delta^{\frac{1}{2}}\cdot f$ is a positive definite function, that is, if $\la(f)$ is a positive operator on $L^2(G)$,
then $\la(f)$ is integrable with respect to the Plancherel weight $\f$, that is, $\la(f)\in \M_\f$.
\end{prop}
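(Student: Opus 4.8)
The plan is to write $f$ explicitly as a product $\xi^{*}*\xi$ with $\xi\in L^{2}(G)$ left bounded; everything after that is bookkeeping. Indeed, once such a $\xi$ is found, the very definition of $\cont_{e}(G)=\spn\{\eta^{*}*\zeta:\eta,\zeta\text{ left bounded}\}$ gives $f\in\cont_{e}(G)$, so $\la(f)\in\la\bigl(\cont_{e}(G)\bigr)=\M_{\tilde\f}$; since $f\in\cont_{c}(G)\sbe L^{1}(G)$ we also have $\la(f)\in\csg$; and as $\la(f)=\la(\xi)^{*}\la(\xi)$ is positive with $\tilde\f\bigl(\la(f)\bigr)=(\xi^{*}*\xi)(e)=\|\xi\|_{2}^{2}<\infty$, we conclude $\la(f)\in\M_{\tilde\f}^{+}\cap\csg^{+}=\M_{\f}^{+}\sbe\M_{\f}$.

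To produce $\xi$ I would route through the anti-unitary $J$, using only the already-recorded relations $\rho(g)=J\la(Jg)J$ and $J(g*\eta)=(J\eta)*(Jg)$. Conjugating the first identity by $J$ (recall $J^{2}=\id$, and that conjugation by an anti-unitary preserves positivity) shows that $\la(f)$ is a positive operator if and only if $\rho(Jf)$ is positive, which by \cite[Proposition~7.1.9]{pedersen} happens if and only if $Jf\in\cont_{c}(G)$ is positive definite. Applying \cite[Lemma~7.2.4]{pedersen} to $Jf$ then gives a right bounded function $\eta\in L^{2}(G)$ with $Jf=\eta*\tilde\eta$, where $\tilde\eta(t):=\overline{\eta(t^{-1})}$. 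Applying $J$ once more and invoking the identity $J(\eta*\tilde\eta)=(J\eta)^{*}*(J\eta)$ yields $f=\xi^{*}*\xi$ with $\xi:=J\eta$, and $\xi$ is left bounded precisely because $\eta$ is right bounded (again by $J(g*\eta)=(J\eta)*(Jg)$, which identifies $g\mapsto g*\eta$ with left convolution by $J\eta$ up to conjugation by $J$). This finishes the construction, hence the proof.

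The computational content is limited to the three change-of-variable identities for $J$ (namely $\rho(g)=J\la(Jg)J$, $J(g*\eta)=(J\eta)*(Jg)$ and $J(\eta*\tilde\eta)=(J\eta)^{*}*(J\eta)$), each an elementary manipulation of $J\xi(t)=\Delta(t)^{-1/2}\overline{\xi(t^{-1})}$ together with the basic properties of the modular function $\Delta$. I do not foresee a genuine obstacle; the only subtle point is that positivity of $\la(f)$ corresponds to positive definiteness of the $\Delta^{1/2}$-twisted function rather than of $f$ itself, and interchanges the left and right boundedness conditions — but this is exactly what conjugation by $J$ is built to absorb, converting the right bounded $\eta$ into the left bounded $\xi$ needed to land inside $\cont_{e}(G)$.
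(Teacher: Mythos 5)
Your proposal is correct and follows essentially the same route as the paper: transport positivity of $\la(f)$ through the anti-unitary $J$ to positive definiteness of $Jf$ via $\rho(g)=J\la(Jg)J$ and \cite[Proposition 7.1.9]{pedersen}, apply \cite[Lemma 7.2.4]{pedersen} to write $Jf=\eta*\tilde\eta$ with $\eta$ right bounded, and then use $J(\eta*\tilde\eta)=(J\eta)^**(J\eta)$ to get $f=\xi^**\xi$ with $\xi=J\eta$ left bounded, hence $f\in\cont_e(G)$ and $\la(f)\in\M_\f$. Your extra bookkeeping (that $\la(f)\in\csg$ and $\tilde\f(\la(f))=\|\xi\|_2^2<\infty$, so one lands in $\M_\f^+$ and not merely in $\M_{\tilde\f}$) is a welcome, slightly more explicit rendering of a step the paper leaves terse.
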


\begin{obs} Given $f\in \cont_c(G)$, it is not true in general that $\la(f)\in \M_\f$.
Indeed, assume that $G$ is compact so that $\cont_c(G)=\cont(G)$.
Then the inclusion $\la\bigl(\cont(G)\bigr)\sbe\M_\f\sbe\la\bigl(\cont_e(G)\bigr)$ implies
$\cont_e(G)=\cont(G)$ because we always have $\cont_e(G)\sbe\cont(G)$.
Since $G$ is compact, $L^2(G)\sbe L^1(G)$ and therefore any function in $L^2(G)$ is left bounded. Thus $\cont_e(G)$
equals the linear span of $L^2(G)*L^2(G)$. Therefore, the inclusion $\la\bigl(\cont(G)\bigr)\sbe\M_\f$ implies that $\cont(G)$ equals
the linear span of $L^2(G)*L^2(G)$. This is true if only if $G$ is finite \textup{\cite[34.40]{hewitt_ross1}}.
Hence, if $G$ is a compact infinite group, $\la\bigl(\cont(G)\bigr)$ is not contained in $\M_\f$.
\end{obs}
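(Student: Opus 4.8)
The plan is to argue by contradiction, reducing the failure of the inclusion $\la(\cont_c(G))\sbe\M_\f$ to a known structural fact about compact groups. I would take $G$ to be a compact infinite group, so that $\cont_c(G)=\cont(G)$, and assume toward a contradiction that $\la(\cont(G))\sbe\M_\f$. The strategy is not to identify a single bad function $f$ explicitly, but to show that the hypothesized inclusion would force $\cont_e(G)$ to exhaust all of $\cont(G)$, which is impossible for infinite $G$.

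First I would squeeze the situation between two inclusions. The assumption gives $\la(\cont(G))\sbe\M_\f$, while on the other side, since $\f$ is the restriction of $\tilde\f$ to $\csg$, every integrable element of $\csg$ lies in $\M_{\tilde\f}=\la(\cont_e(G))$; hence $\M_\f\sbe\la(\cont_e(G))$. Because the Fourier transform $x\mapsto\hat{x}$ of Proposition~\ref{369} identifies $\M_{\tilde\f}$ with $\cont_e(G)$ and in particular $\la$ is injective, applying $\la^{-1}$ to $\la(\cont(G))\sbe\la(\cont_e(G))$ yields $\cont(G)\sbe\cont_e(G)$. Combined with the always-valid reverse inclusion $\cont_e(G)\sbe\cont(G)$, this forces $\cont_e(G)=\cont(G)$.

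Next I would compute $\cont_e(G)$ explicitly in the compact case. Compactness makes the Haar measure finite, so $L^2(G)\sbe L^1(G)$, and Young's inequality shows that convolution by any $L^1$ function is bounded on $L^2(G)$; hence every $\xi\in L^2(G)$ is left bounded and the ``left bounded'' qualifier in the definition of $\cont_e(G)$ becomes vacuous. Since $G$ is compact it is unimodular ($\Delta\equiv 1$), so the involution reduces to $\xi^*(t)=\overline{\xi(t^{-1})}$, and $\xi\mapsto\xi^*$ is a conjugate-linear isometric bijection of $L^2(G)$. Therefore $\cont_e(G)=\spn\{\xi^**\eta:\xi,\eta\in L^2(G)\}$ coincides with the linear span of $L^2(G)*L^2(G)$. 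The equality $\cont_e(G)=\cont(G)$ then says precisely that $\cont(G)=\spn\bigl(L^2(G)*L^2(G)\bigr)$.

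The crux, and the step I would lean on rather than reprove, is the classical fact \cite[34.40]{hewitt_ross1} that $\cont(G)=\spn\bigl(L^2(G)*L^2(G)\bigr)$ holds if and only if $G$ is finite. Since we assumed $G$ infinite, this contradicts the conclusion of the previous paragraph, and so $\la(\cont(G))\not\sbe\M_\f$ for any compact infinite group. This exhibits the desired counterexample to $\la(f)\in\M_\f$ for general $f\in\cont_c(G)$. The only genuinely deep input is the Hewitt--Ross characterization of when the convolution square of $L^2$ fills up $\cont(G)$; everything else is a matter of unwinding the definitions of $\cont_e(G)$ and of left boundedness in the compact, unimodular setting.
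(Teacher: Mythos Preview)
Your proposal is correct and follows essentially the same argument as the paper's remark: assume the inclusion, squeeze to get $\cont_e(G)=\cont(G)$, use compactness to identify $\cont_e(G)$ with $\spn\bigl(L^2(G)*L^2(G)\bigr)$, and then invoke \cite[34.40]{hewitt_ross1} for the contradiction. Your added justifications (Young's inequality for left boundedness, unimodularity and the bijectivity of $\xi\mapsto\xi^*$ to pass from $\spn\{\xi^**\eta\}$ to $\spn(L^2*L^2)$) are minor elaborations of steps the paper leaves implicit, not a different route.
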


Proposition~\ref{positive definite are integrable} can be generalized to operator-valued functions.
First, we have to extend the left regular representation to operator-valued functions: given a $C^*$-algebra $A$,
there is a canonical map $\la_A$ from $\cont_c\bigl(G,\M^\st(A)\bigr)$ into $\M\bigl(A\otimes \csg\bigr)$ that coincides with the left regular representation $\la:C_c(G)\to\Ls(L^2(G))$ if $A=\C$. In fact, assume that $A$ is a nondegenerate
$C^*$-subalgebra of $\Ls(H)$, so that $A\otimes \csg$ -- and so also $\M\bigl(A\otimes\csg\bigr)$ -- is a nondegenerate $C^*$-subalgebra of $\Ls\bigl(H\otimes L^2(G)\bigr)\cong \Ls\bigl(L^2(G,H)\bigr)$.
The map $\la_A$ is then given by
$$\bigl(\la_A(f)\xi\bigr)(t)=(f*\xi)(t):=\int_G f(s)\xi(s^{-1}t)\dd{s}$$
for all $f\in \cont_c\bigl(G,\M^\st(A)\bigr)$, $\xi\in \cont_c(G,H)$ and $t\in G$.

\begin{prop} Let $f$ be a function in $\cont_c\bigl(G,\M^\st(A)\bigr)$. Then $\la_A(f)$ is a positive operator if
and only if the pointwise product $\Delta^{\frac{1}{2}}\cdot f$ is a positive definite function.
Moreover, in this case $a:=\la_A(f) \in\M\bigl(A\otimes\csg\bigr)$ is an integrable element and $\hat{a}=f$.
In particular, we have the following formula for $\la_A(f)$\textup:
$$\la_A(f)=\int_G^\su f(t)\otimes\la_t\dd{t}.$$
\end{prop}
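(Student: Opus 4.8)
The plan is to reduce the operator-valued statement to the scalar case already established in Proposition~\ref{positive definite are integrable}, using the slicing-with-weights machinery of Section~2. First I would observe that the equivalence ``$\la_A(f)$ is positive $\iff$ $\Delta^{\frac12}\cdot f$ is positive definite'' should be checked by slicing: for $\theta\in A^*_+$ one has $(\theta\otimes\id)\bigl(\la_A(f)\bigr)=\la(\theta\circ f)$, because the slice map intertwines $\la_A$ with $\la$ in the sense that $(\theta\otimes\id)\circ\la_A=\la\circ(\theta\circ(\cdot))$ — this follows by testing against vectors $\xi\in\cont_c(G,H)$ and using the definition of $\la_A$ via convolution. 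Positivity of $\la_A(f)$ in $\M(A\otimes\csg)\sbe\Ls(L^2(G,H))$ is equivalent to positivity of all its slices $\la(\theta\circ f)$, $\theta\in A^*_+$, which by the scalar discussion preceding Proposition~\ref{positive definite are integrable} is equivalent to $\Delta^{\frac12}\cdot(\theta\circ f)$ being positive definite for all $\theta\in A^*_+$, which in turn is equivalent to $\Delta^{\frac12}\cdot f$ being positive definite (positive definiteness of an $\M(A)$-valued function is, by definition, tested through states/positive functionals on matrix amplifications, and $\Delta^{\frac12}$ is a scalar multiplier).

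Next, assuming $a:=\la_A(f)$ is positive, I would show $a$ is integrable with $\hat a=f$. By Proposition~\ref{positive definite are integrable} applied to each slice, $\la(\theta\circ f)=(\theta\otimes\id)(a)\in\M_\f$ for every $\theta\in A^*_+$, and by Proposition~\ref{369}(ii) its Plancherel-Fourier transform is $\theta\circ f$, i.e.\ $\f\bigl(\la_t^{-1}(\theta\otimes\id)(a)\bigr)=(\widehat{\theta\circ f}\,)(t)=\theta(f(t))$. Since $f(t)\in\M(A)$, Definition~\ref{integrable_elements} applied with $b=f(t)$ to the element $(1\otimes\la_t^{-1})a$ shows $(1\otimes\la_t^{-1})a$ is integrable with $(\id\otimes\f)\bigl((1\otimes\la_t^{-1})a\bigr)=f(t)$; taking $t=e$ gives integrability of $a$ itself. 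Hence $\hat a(t)=f(t)$ for all $t$, i.e.\ $\hat a=f$.

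Finally, to get the integral formula, I would apply Theorem~\ref{034}: it suffices to check that $t\mapsto\hat a(t)\otimes\la_t=f(t)\otimes\la_t$ is strictly-unconditionally integrable in $\M(A\otimes\csg)$. Here I would use that $f\in\cont_c(G,\M^\st(A))$ is compactly supported and strictly continuous, so the strict Bochner integrals $\int_K^\st f(t)\otimes\la_t\dd t$ over relatively compact measurable $K$ all lie inside the integral over $\mathrm{supp}(f)$ and stabilize there — more precisely, $\|\int_K f(t)\otimes\la_t\dd t - \int_{K'} f(t)\otimes\la_t\dd t\|\le \sup_t\|f(t)\|\cdot\mu(K\triangle K')$ restricted to $\mathrm{supp}(f)$, giving a Cauchy net in norm, hence strict convergence. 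Then Theorem~\ref{034} yields $a=\int_G^\su\hat a(t)\otimes\la_t\dd t=\int_G^\su f(t)\otimes\la_t\dd t$, which is the claimed formula.

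The main obstacle I expect is the first step: verifying cleanly that positive definiteness of the $\M(A)$-valued function $\Delta^{\frac12}\cdot f$ is detected by the scalar slices $\Delta^{\frac12}\cdot(\theta\circ f)$, and simultaneously that the slice map genuinely intertwines $\la_A$ and $\la$. One must be a little careful that $\la_A(f)$ really is a multiplier of $A\otimes\csg$ (not merely a bounded operator on $L^2(G,H)$) — this uses compact support of $f$ — and that positivity in the multiplier algebra is equivalent to positivity as a Hilbert-space operator, which holds since $\M(A\otimes\csg)$ is a unital $C^*$-subalgebra of $\Ls(L^2(G,H))$. Everything after that is a direct application of the results already proved.
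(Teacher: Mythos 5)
Your proposal is correct and follows essentially the same route as the paper's own proof: compute that slicing by $\theta\in A^*_+$ intertwines $\la_A$ with $\la$, reduce both the positivity statement and the integrability/Fourier-transform computation to the scalar results (Proposition~\ref{positive definite are integrable} and Proposition~\ref{369}), and then apply Theorem~\ref{034}, with compact support of $f$ giving the strict unconditional integrability of $t\mapsto f(t)\otimes\la_t$ (the paper simply notes strict Bochner integrability, your Cauchy-net argument is the same point). One small caveat: Definition~\ref{integrable_elements} is stated only for \emph{positive} elements, so for $t\neq e$ you cannot literally ``apply it with $b=f(t)$'' to $(1\otimes\la_t^{-1})a$; instead, integrability of $a$ (your $t=e$ case) plus the analyticity of $\la_t$ (the remark following Definition~\ref{009}) guarantees $(1\otimes\la_t^{-1})a\in\bar\M_{\id\otimes\f}$, and then your slice computation identifies $\hat a(t)$ with $f(t)$ exactly as in the paper.
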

\begin{proof} If $\theta\in A^*_+$ is a positive linear functional on $A$, then a straightforward calculation shows that
$(\theta\otimes\id)\bigl(\la_A(f)\bigr)=\la(\theta\circ f)$, where $(\theta\circ f)(t):=\theta\bigl(f(t)\bigr)$. Hence,
$\la_A(f)\geq 0$ if and only if $(\theta\otimes\id)\bigl(\la_A(f)\bigr)=\la(\theta\circ f)\geq 0$ for all  $\theta\in A^*_+$.
By the discussion preceding Proposition~\ref{positive definite are integrable}, $\la_A(f)$ is positive if and only if
$\Delta^{\frac{1}{2}}\cdot(\theta\circ f)=\theta\circ(\Delta^{\frac{1}{2}}\cdot f)$ is positive definite for all $\theta\in A^*_+$.
And this is equivalent to $\Delta^{\frac{1}{2}}\cdot f$ being positive definite. This proves the first assertion.
By Proposition~\ref{positive definite are integrable}, $(\theta\otimes\id)(a)=\la(\theta\circ f)\in \M_\f$ for all $\theta\in A^*_+$,
and $$\f\bigl((\theta\otimes\id)(a)\bigr)=\f\bigl(\la(\theta\circ f)\bigr)=(\theta\circ f)(e)=\theta\bigl(f(e)\bigr).$$ This shows that $a$ is integrable and $(\id\otimes\f)(a)=f(e)$.
Moreover, Proposition~\ref{369}(ii) yields
\begin{align*}
\theta\bigl(\hat{a}(t)\bigr)&=\theta\bigl((\id\otimes\f)((1\otimes\la_t^{-1})a)\bigr)=\f\bigl((\theta\otimes\id)((1\otimes\la_t^{-1})a)\bigr)
    \\&=\f\bigr(\la_t^{-1}(\theta\otimes\id)(a)\bigr)=\f\bigl(\la_t^{-1}\la(\theta\circ f)\bigr)=(\theta\circ f)(t)=\theta\bigl(f(t)\bigr).
\end{align*}
Since $\theta$ is arbitrary, we get $\hat{a}=f$. The final assertion follows from Theorem~\ref{034} because
$f$ is strictly Bochner integrable and hence also strictly-unconditionally integrable.
\end{proof}

\section{Further properties of the Fourier transform}

In this section we analyze some additional properties of the Fourier transform $t\mapsto\hat{a}(t)$ of an integrable
element $a\in \M\bigl(A\otimes\csg\bigr)$. We prove that $\hat{a}$ is always a strictly continuous
function and that $\Delta^{\frac{1}{2}}\cdot\hat{a}$ is a positive definite function if $a$ is a positive integrable element.

First, we need some preparation. We say that $a\in \M\bigl(A\otimes\csg\bigr)$ is \emph{square-integrable}
(with respect to the Plancherel weight $\f$) if $a^*a$ is an integrable element. Let $\bar\N_{\id\otimes\f}$ be the
space of square-integrable elements in $\M\bigl(A\otimes\csg\bigr)$. Then $\bar\N_{\id\otimes\f}$ is a right
ideal in $\M\bigl(A\otimes\csg\bigr)$ and the space of integrable elements $\bar\M_{\id\otimes\f}$ is
the linear span of $\bar\N_{\id\otimes\f}^*\bar\N_{\id\otimes\f}=\{a^*b:a,b\in \bar\N_{\id\otimes\f}\}$.

Recall that a \emph{GNS-construction} for a weight $\f$ on a $C^*$-algebra $C$
is a triple $(K,\pi,\La)$, where $K$ is some Hilbert space,
$\La:\N_\f\to K$ is a linear map with dense image satisfying $\f(a^*b)=\braket{\La(a)}{\La(b)}$ for all $a,b\in \N_\f$,
and $\pi:C\to\Ls(K)$ is a $*$-representation of $C$ satisfying $\pi(a)\La(b)=\La(ab)$ for all $a\in C$ and $b\in \N_\f$.
A GNS-construction always exists and is unique up to unitary transformation.

There is a canonical GNS-construction for the Plancherel weight $\f$ on $\csg$ given by $(L^2(G),\iota,\La)$,
where $\iota$ denotes the inclusion map $\csg\hookrightarrow\Ls\bigl(L^2(G)\bigr)$ and
$\La(\la(\xi))=\xi$ for every left bounded function $\xi\in L^2(G)$ with $\la(\xi)\in \csg$.
We always use the GNS-construction $(L^2(G),\iota,\La)$ for $\f$.

The GNS-map $\La:\N_\f\to L^2(G)$ can be naturally extended to a linear map $\id\otimes\La:\bar\N_{\id\otimes\f}\to\Ls\bigl(A,L^2(G,A)\bigr)$.
Here $L^2(G,A)\cong A\otimes L^2(G)$ denotes the Hilbert $A$-module defined as the completion of $\cont_c(G,A)$ with
respect to the inner product $\braket{f}{g}_A:=\int_Gf(t)^*g(t)\dd{t}$ and the canonical right $A$-action.
The space $\Ls\bigl(A,L^2(G,A)\bigr)$ is set of all adjointable maps $A\to L^2(G,A)$, where we view $A$ as a Hilbert $A$-module in the obvious way.
The map $\id\otimes\La$ is characterized by the equation
$(\id\otimes\La)(a)^*\bigl(b\otimes\La(x)\bigr)=(\id\otimes\f)\bigl(a^*(b\otimes x)\bigr)$ for all $a\in \bar\N_{\id\otimes\f}$, $b\in A$ and $x\in \N_\f$. We refer to \cite{kustermans_vaes} for more details on the construction and properties
of the map $\id\otimes\La$. One of its basic properties is the relation
\begin{equation}\label{001}
(\id\otimes\La)(a)^*(\id\otimes\La)(b)=(\id\otimes\f)(a^*b)\quad\mbox{for all }a,b\in \bar\N_{\id\otimes\f}.
\end{equation}

\begin{prop}\label{002} Let $a\in \M\bigl(A\otimes\csg\bigr)$ be an integrable element and let $b_i,c_i\in \M\bigl(A\otimes\csg\bigr)$ be
square-integrable elements with $a=\sum\limits_{i=1}^n b_i^*c_i$. Then
$$\hat{a}(t)=\sum\limits_{i=1}^n(\id\otimes\La)(b_i)^*V_t(\id\otimes\La)(c_i)\quad\mbox{for all }t\in G,$$
where $V:G\to \Ls\bigl(L^2(G,A)\bigr)$ is the representation of $G$ defined by $V_t(f)(s):=f(st)$ for all $f\in \cont_c(G,A)$ and $t,s\in G$.
\end{prop}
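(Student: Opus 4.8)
Proof plan. The plan is to peel off one summand, push the twist $1\otimes\la_t^{-1}$ through the GNS map $\id\otimes\La$, and recognise the operator it produces as $V_t$.

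Since the Fourier transform is linear on the space $\bar\M_{\id\otimes\f}$ of integrable elements and each $b_i^*c_i$ lies in $\bar\M_{\id\otimes\f}$, it suffices to treat $n=1$: given $b,c\in\bar\N_{\id\otimes\f}$, one has to show $\widehat{b^*c}(t)=(\id\otimes\La)(b)^*V_t(\id\otimes\La)(c)$. As $\la_t$ is unitary, $(1\otimes\la_t^{-1})b^*=\bigl(b(1\otimes\la_t)\bigr)^*$, and $b(1\otimes\la_t)$ is again square-integrable (because $\bar\N_{\id\otimes\f}$ is a right ideal; in any case one checks directly, after slicing, that $\f\bigl(\la_t^{-1}(\cdot)\la_t\bigr)=\Delta(t)^{-1}\f$ on $\bar{\M}_\f^{+}$). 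Hence, by Definition~\ref{009} and relation~\eqref{001},
\[
\widehat{b^*c}(t)=(\id\otimes\f)\bigl((1\otimes\la_t^{-1})b^*c\bigr)=(\id\otimes\f)\bigl((b(1\otimes\la_t))^*c\bigr)=(\id\otimes\La)\bigl(b(1\otimes\la_t)\bigr)^*\,(\id\otimes\La)(c),
\]
so the statement reduces to the identity $(\id\otimes\La)\bigl(x(1\otimes\la_t)\bigr)=V_t^*(\id\otimes\La)(x)$ for $x\in\bar\N_{\id\otimes\f}$; granting it, take adjoints, use $(V_t^*)^*=V_t$, and sum over $i$.

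The first step is the scalar case: for a left bounded $\xi\in L^2(G)$ with $\la(\xi)\in\csg$,
\[
\La\bigl(\la(\xi)\la_t\bigr)=V_t^*\xi ,
\]
where $V_t^*$ denotes the adjoint of $V_t$ on $L^2(G)$. This needs no real computation: for every left bounded $\eta$ with $\la(\eta)\in\csg$, the GNS relation for $\f$ together with $(\la(\xi)\la_t)^*\la(\eta)=\la_t^{-1}\la(\xi^**\eta)$ and Proposition~\ref{369}(ii) give $\bigl\langle\La(\la(\xi)\la_t)\,\big|\,\La(\la(\eta))\bigr\rangle=\f\bigl(\la_t^{-1}\la(\xi^**\eta)\bigr)=\widehat{\la(\xi^**\eta)}(t)=(\xi^**\eta)(t)$, which by the formula $(\xi^**\eta)(t)=\langle\xi\mid V_t\eta\rangle$ from Section~\ref{S:plancherel_weight} equals $\langle V_t^*\xi\mid\La(\la(\eta))\rangle$; as the vectors $\La(\la(\eta))$ are dense in $L^2(G)$, the identity follows. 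Replacing $t$ by $t^{-1}$ and using $V_{t^{-1}}^*=\Delta(t)V_t$ one reads off the consequence $V_t\zeta=\La\bigl(\Delta(t)^{-1}\la(\zeta)\la_t^{-1}\bigr)$, which is what will be used below.

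For the operator-valued identity, both $(\id\otimes\La)(x(1\otimes\la_t))$ and $V_t^*(\id\otimes\La)(x)$ are adjointable maps $A\to L^2(G,A)$, so it is enough to compare their adjoints on the total set of vectors $y\otimes\La(z)$ with $y\in A$ and $z=\la(\zeta)\in\N_\f$. By the characterising equation for $\id\otimes\La$, the adjoint of the left-hand side sends $y\otimes\La(z)$ to $(\id\otimes\f)\bigl((1\otimes\la_t^{-1})x^*(y\otimes z)\bigr)$; the adjoint of the right-hand side sends it to $(\id\otimes\La)(x)^*\bigl(y\otimes V_t\La(z)\bigr)$, which by the scalar case (applied to $\zeta$) equals $(\id\otimes\La)(x)^*\bigl(y\otimes\La(\Delta(t)^{-1}z\la_t^{-1})\bigr)=\Delta(t)^{-1}(\id\otimes\f)\bigl(x^*(y\otimes z)(1\otimes\la_t^{-1})\bigr)$. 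These two expressions coincide because of the identity
\[
(\id\otimes\f)\bigl((1\otimes\la_t^{-1})w\bigr)=\Delta(t)^{-1}(\id\otimes\f)\bigl(w(1\otimes\la_t^{-1})\bigr)\qquad(w\in\bar\M_{\id\otimes\f}),
\]
which one obtains by applying an arbitrary $\theta\in A^*$ to both sides and using the scalar relation $\f(\la_t^{-1}y)=\Delta(t)^{-1}\f(y\la_t^{-1})$ for $y\in\bar\M_\f$ — the instance of the KMS property of the Plancherel weight attached to the analytic element $\la_t$, see~\eqref{E:analytic extension}.

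The step that carries all the weight is this last one: understanding how right multiplication by $1\otimes\la_t$ interacts with the GNS map $\id\otimes\La$. The only genuine subtlety is the bookkeeping of the modular factors $\Delta(t)^{\pm 1}$, which are forced by the non-unimodularity of $G$ but cancel in the end; once the scalar identity $\La(\la(\xi)\la_t)=V_t^*\xi$ and the scalar KMS relation for $\la_t$ are on the table, assembling the displays above into the stated formula is routine.
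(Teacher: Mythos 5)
Your argument is correct, and its first half is exactly the paper's: reduce to $n=1$, use Definition~\ref{009} and relation~\eqref{001} to get $\widehat{b^*c}(t)=(\id\otimes\La)\bigl(b(1\otimes\la_t)\bigr)^*(\id\otimes\La)(c)$, and then identify $(\id\otimes\La)\bigl(b(1\otimes\la_t)\bigr)$ with $V_t^*(\id\otimes\La)(b)$. Where you genuinely diverge is in how this last identification is obtained. The paper invokes \cite[Proposition 3.28]{kustermans_vaes}, which simultaneously gives the square-integrability of $b(1\otimes\la_t)$ and the Tomita-type formula $(\id\otimes\La)\bigl(b(1\otimes\la_t)\bigr)=\bigl(1\otimes J\sigma_{\frac{\ii}{2}}(\la_t)^*J\bigr)(\id\otimes\La)(b)$, and then only has to compute $J\sigma_{\frac{\ii}{2}}(\la_t)J=V_t$ from \eqref{E:analytic extension} and the explicit modular conjugation. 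You instead prove the identity by hand: the scalar formula $\La\bigl(\la(\xi)\la_t\bigr)=V_t^*\xi$ (correct -- indeed $\la(\xi)\la_t=\la(V_t^*\xi)$), the relative invariance $\f(\la_t^{-1}y)=\Delta(t)^{-1}\f(y\la_t^{-1})$ on $\bar\M_\f$, and a density argument through the characterizing equation of $\id\otimes\La$ on vectors $y\otimes\La(z)$. Your modular bookkeeping checks out ($V_t^*=\Delta(t)^{-1}V_{t^{-1}}$, $V_{t^{-1}}^*=\Delta(t)V_t$), and the route buys self-containedness -- no appeal to the quoted Kustermans--Vaes machinery beyond the characterizing equation -- at the price of carrying the factors $\Delta(t)^{\pm1}$ explicitly; the paper's route is shorter but rests on the cited proposition.

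One caveat: your parenthetical justification that $b(1\otimes\la_t)$ is square-integrable ``because $\bar\N_{\id\otimes\f}$ is a right ideal'' is not a valid argument. Square-integrability, defined by $a^*a$ being integrable, is stable under \emph{left} multiplication by multipliers (exactly as $\N_\f$ is a left ideal), whereas what you need is stability under \emph{right} multiplication by $1\otimes\la_t$; this is precisely the point where the analyticity of $\la_t$ -- or, in your version, the relative invariance $\f(\la_t^{-1}\,\cdot\,\la_t)=\Delta(t)^{-1}\f$ -- must enter, and it fails for right multiplication by a general multiplier. Since you immediately supply that direct verification, the proof stands; just drop the ideal-theoretic remark and let the slicing computation carry the claim.
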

\begin{proof} It is enough to consider $a$ of the form $a=b^*c$ with $b,c$ square-integrable.
Equation~\eqref{001} yields
$$\hat{a}(t)=(\id\otimes\f)\bigl((b(1\otimes\la_t))^*c\bigr)=(\id\otimes\La)\bigl(b(1\otimes\la_t)\bigr)^*(\id\otimes\La)(c).$$
Since $\la_t$ is analytic with respect to the modular automorphism group $\sigma$ of $\f$ (see Section~\ref{S:plancherel_weight}),
it follows from \cite[Proposition 3.28]{kustermans_vaes} that $b(1\otimes\la_t)$ is square-integrable and
$(\id\otimes\La)\bigl(b(1\otimes\la_t)\bigr)=(1\otimes J\sigma_{\frac{\ii}{2}}(\la_t)^*J)(\id\otimes\La)(b)$,
where $J$ is the \emph{modular conjugation} of $\f$ in the GNS-construction $(L^2(G),\iota,\La)$.
It remains to show that $1\otimes J\sigma_{\frac{\ii}{2}}(\la_t)J=V_t$ for all $t\in G$.
Equation~\eqref{E:analytic extension} implies $\sigma_{\frac{\ii}{2}}(\la_t)=\Delta(t)^{-\frac{1}{2}}\la_t$.
The modular conjugation is
given by $(J\xi)(s)=\Delta(s)^{-\frac{1}{2}}\overline{\xi(s^{-1})}$ for all $\xi\in L^2(G)$ and $s\in G$.
The desired relation $1\otimes J\sigma_{\frac{\ii}{2}}(\la_t)J=V_t$ now follows.
\end{proof}

\begin{coro}\label{FT:003} Let $a\in \M\bigl(A\otimes\csg\bigr)$ be an integrable element.
Then the Fourier transform $\hat{a}$ is a strictly continuous function $G\to \M(A)$.

If $a$ is positive, then the pointwise product $\Delta^{\frac{1}{2}}\cdot\hat{a}$ is a positive definite function. In general,
$\Delta^{\frac{1}{2}}\cdot\hat{a}$ is a linear combination of positive definite
functions.
\end{coro}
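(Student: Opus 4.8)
The plan is to deduce both assertions from Proposition~\ref{002}, which expresses $\hat a$ locally through the maps $(\id\otimes\La)(\cdot)$ and the translation representation $V$ on $L^2(G,A)$; so the preliminary step is to record the basic properties of $V_t(f)(s)=f(st)$. A change of variables in the left Haar measure should give $\braket{V_tf}{V_tg}_A=\Delta(t)^{-1}\braket{f}{g}_A$ and $V_t^*=\Delta(t)^{-1}V_{t^{-1}}$, hence $\|V_t\|=\Delta(t)^{-1/2}$, and in particular $\tilde V_t:=\Delta(t)^{1/2}V_t$ is a \emph{unitary} representation of $G$ on the Hilbert $A$-module $L^2(G,A)$. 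I would then observe that $\tilde V$ (equivalently $V$, equivalently $t\mapsto V_t^*=\Delta(t)^{-1}V_{t^{-1}}$) is strongly continuous by the usual continuity of translation: this holds on the dense subspace $\cont_c(G,A)$ by uniform continuity of compactly supported functions together with a uniform control of supports, and extends to $L^2(G,A)$ because $t\mapsto\|V_t\|$ is locally bounded.

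For the continuity statement, by Proposition~\ref{002} it suffices to treat $a=b^*c$ with $b,c$ square-integrable, so that $\hat a(t)=B^*V_tC$ with $B:=(\id\otimes\La)(b)$ and $C:=(\id\otimes\La)(c)$ adjointable operators $A\to L^2(G,A)$. For $a_0\in A$ one rewrites, using that $B,C$ are module maps, $\hat a(t)\,a_0=B^*\bigl(V_t(Ca_0)\bigr)$ and $a_0\,\hat a(t)=\bigl(C^*(V_t^*(B(a_0^*)))\bigr)^*$; since $Ca_0,B(a_0^*)\in L^2(G,A)$ are fixed vectors, since $V$ and $V^*$ are strongly continuous, and since $B^*,C^*$ are bounded, both expressions depend norm-continuously on $t$. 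Hence $\hat a$ is strictly continuous, and the general case follows because a finite sum of strictly continuous functions is strictly continuous.

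For the positivity statement, if $a\geq 0$ then $c:=a^{1/2}\in\M\bigl(A\otimes\csg\bigr)$ is square-integrable because $c^*c=a\in\bar\M^+_{\id\otimes\f}$, and Proposition~\ref{002} gives $\hat a(t)=C^*V_tC$ with $C=(\id\otimes\La)(c)$. Multiplying by $\Delta(t)^{1/2}$ and recalling $\tilde V_t=\Delta(t)^{1/2}V_t$ yields
$$\Delta(t)^{1/2}\hat a(t)=C^*\tilde V_tC\qquad(t\in G),$$
which is a function of the form $T^*w_tT$ for the strongly continuous unitary representation $w=\tilde V$ on the Hilbert $A$-module $L^2(G,A)$ and the adjointable operator $T=C$, hence positive definite by the implication (iv)$\Rightarrow$(i) of Proposition~\ref{FT:004}; equivalently one can argue directly that for $t_1,\dots,t_n\in G$ the matrix $\bigl(\Delta^{1/2}(t_i^{-1}t_j)\,\hat a(t_i^{-1}t_j)\bigr)_{i,j}=\bigl((\tilde V_{t_i}C)^*(\tilde V_{t_j}C)\bigr)_{i,j}$ is a Gram matrix and hence positive. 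For a general integrable $a$, the polarization identity $b^*c=\frac14\sum_{k=0}^3\ii^k(c+\ii^kb)^*(c+\ii^kb)$ together with the fact that $\bar\N_{\id\otimes\f}$ is a linear subspace writes $a$ as a linear combination of elements $d^*d$ with $d$ square-integrable; applying the case just treated to each $d^*d$ shows that $\Delta^{1/2}\cdot\hat a$ is a linear combination of positive definite functions.

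The only point requiring genuine care is the bookkeeping with the modular function: one must track the factors $\Delta(t)^{\pm1/2}$ carefully so that $\tilde V$ really comes out unitary and so that the Gram-matrix identity above closes exactly — this is essentially the same normalization issue already visible in the proof of Proposition~\ref{002}, where the factor $\sigma_{\frac{\ii}{2}}(\la_t)=\Delta(t)^{-1/2}\la_t$ appears. I do not expect any serious obstacle beyond this and beyond stating the (routine) continuity-of-translation fact on $L^2(G,A)$ explicitly.
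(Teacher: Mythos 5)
Your proof is correct and follows essentially the same route as the paper: identify $\Delta(t)^{1/2}V_t$ as the unitary right regular representation on $L^2(G,A)$, write a positive integrable $a$ as $(a^{1/2})^*(a^{1/2})$, and read off strict continuity and positive definiteness from the form $S^*\rho_tS$ given by Proposition~\ref{002}. The only (harmless) deviations are that you verify the unitarity/strong continuity and the Gram-matrix positivity explicitly, and you reduce the general case by polarization of $b^*c$, where the paper simply invokes that integrable elements are by definition linear combinations of positive integrable ones.
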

\begin{proof}
Note that $\rho_t:=\Delta(t)^{\frac{1}{2}}V_t$ is the right regular representation
of $G$ on $L^2(G,A)$.
Any integrable element is, by definition, a linear combination of positive integrable elements.
If $a$ is a positive integrable element,
then $a=b^*b$ for some square-integrable element $b\in \M\bigl(A\otimes\csg\bigr)$; take for instance $b=a^{\frac{1}{2}}$.
Proposition~\ref{002} implies that $\Delta(t)^{\frac{1}{2}}\cdot\hat{a}(t)=S^*\rho_t S$, where $S:=(\id\otimes\La)(b)\in \Ls\bigl(A,L^2(G,A)\bigr)$.
Since $\rho$ is a strongly continuous unitary representation of $G$,
functions of the form $t\mapsto S^*\rho_t S$ are positive definite and strictly continuous.
\end{proof}

\begin{coro} Assume that $A$ is faithfully and nondegerately represented in $\Ls(H)$ for some Hilbert space $H$.
If $a\in \M(A\otimes\csg)$ is an integrable element, then $\Delta^{\frac{1}{2}}\cdot\hat{a}:G\to \M(A)\sbe\Ls(H)$
is a left strongly-uniformly continuous function. Moreover, if $G$ has equivalent uniform structures, then $\hat{a}$
is a strictly-uniformly continuous function $G\to\M(A)$.
\end{coro}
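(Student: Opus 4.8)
The plan is to reduce both assertions to the explicit formula for $\Delta^{\frac12}\cdot\hat a$ that appears inside the proof of Corollary~\ref{FT:003}. Write the integrable element $a$ as a finite linear combination $a=\sum_k c_k a_k$ of positive integrable elements $a_k\in\M\bigl(A\otimes\csg\bigr)^+$, and set $b_k:=a_k^{\frac12}\in\bar\N_{\id\otimes\f}$ and $S_k:=(\id\otimes\La)(b_k)\in\Ls\bigl(A,L^2(G,A)\bigr)$. Since $A$ is represented faithfully and nondegenerately on $H$, each $S_k$ can be regarded as a bounded operator $H\to L^2(G,H)$, and the right regular representation $\rho_t=\Delta(t)^{\frac12}V_t$ extends to a strongly continuous unitary representation of $G$ on $L^2(G,H)$. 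Applying Proposition~\ref{002} to $a_k=b_k^*b_k$ gives $\hat a_k(t)=S_k^*V_tS_k$, and by linearity of the Fourier transform we get
\[
\Delta(t)^{\frac12}\,\hat a(t)=\sum_k c_k\,S_k^*\rho_t S_k\qquad\text{for all }t\in G.
\]

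First I would prove that $\Delta^{\frac12}\cdot\hat a$ is left strongly-uniformly continuous. Fix $\xi\in H$. Using the multiplicativity $\rho_{ts}=\rho_t\rho_s$, for all $s,t\in G$ one has
\[
\bigl\|(\Delta^{\frac12}\cdot\hat a)(ts)\xi-(\Delta^{\frac12}\cdot\hat a)(t)\xi\bigr\|
=\Bigl\|\sum_k c_k\,S_k^*\rho_t(\rho_s-1)S_k\xi\Bigr\|
\le\sum_k|c_k|\,\|S_k\|\,\bigl\|(\rho_s-1)(S_k\xi)\bigr\|.
\]
The right-hand side does not involve $t$ and tends to $0$ as $s\to e$, because $\rho$ is strongly continuous on $L^2(G,H)$; this is exactly the claimed uniform continuity. (Alternatively, Corollary~\ref{FT:003} exhibits $\Delta^{\frac12}\cdot\hat a$ as a finite linear combination of weakly continuous positive definite functions $G\to\M(A)\sbe\Ls(H)$, each of which is left strongly-uniformly continuous by Proposition~\ref{FT:004}, and finite linear combinations of such functions are again left strongly-uniformly continuous.)

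For the last assertion, recall that a locally compact group with equivalent left and right uniform structures is unimodular; hence $\Delta\equiv1$, so $\hat a=\Delta^{\frac12}\cdot\hat a=\sum_k c_k\bigl(t\mapsto S_k^*\rho_tS_k\bigr)$ is a finite linear combination of strictly continuous positive definite functions $G\to\M(A)$ by Corollary~\ref{FT:003}. By Remark~\ref{warning on uniform continuity} each summand $t\mapsto S_k^*\rho_tS_k$ is strictly-uniformly continuous, and since strict-uniform continuity is preserved under finite linear combinations, $\hat a$ is strictly-uniformly continuous.

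I expect the only genuine subtlety to be the bookkeeping: identifying the adjointable module operators $S_k:A\to L^2(G,A)$ with Hilbert-space operators $H\to L^2(G,H)$ and verifying that $\rho$ stays strongly continuous there, and — for the second part — exploiting that equivalence of the uniform structures forces $\Delta\equiv1$, since otherwise the scalar factor $\Delta(t)^{-\frac12}$ relating $\hat a$ to $\Delta^{\frac12}\cdot\hat a$ is unbounded in $t$ and the conclusion genuinely fails. Everything else is a routine $\varepsilon$-estimate.
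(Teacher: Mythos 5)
Your proposal is correct and follows essentially the same route as the paper: the paper's proof simply cites Corollary~\ref{FT:003} (which already writes $\Delta^{\frac{1}{2}}\cdot\hat{a}$ as a linear combination of functions $t\mapsto S^*\rho_tS$) together with Proposition~\ref{FT:004} for left strong-uniform continuity, and then uses Remark~\ref{warning on uniform continuity} plus unimodularity of groups with equivalent uniform structures for the second assertion. Your explicit $\varepsilon$-estimate merely unwinds the same argument already given at the end of the proof of Proposition~\ref{FT:004}, so there is no genuine difference in approach.
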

\begin{proof} By Corollary~\ref{FT:003}, $\Delta^{\frac{1}{2}}\cdot\hat{a}$ is a linear combination of strictly continuous
positive definite functions $G\to\M(A)$. Proposition~\ref{FT:004} yields the first assertion.
The final assertion follows from Remark~\ref{warning on uniform continuity}.
Note that $G$ is unimodular if it has equivalent uniform structures \cite[19.28]{hewitt_ross1}.
\end{proof}

\noindent
\textbf{Alcides Buss}\newline
Mathematisches Institut\newline
Westf\"alisches Wilhelms-Universit\"at M\"unster\newline
Einsteinstraße\ 62,\, 48149 M\"unster,\, Germany
\vskip 1pc
\noindent
E-mail: abuss@math.uni-muenster.de

\end{document}